\documentclass[11pt,a4paper]{article}

\usepackage{lmodern}
\usepackage{amsmath,amssymb,amsthm,mathtools}
\usepackage{geometry}
\usepackage{xcolor}            
\usepackage[normalem]{ulem}    
\usepackage{hyperref}
\hypersetup{colorlinks=true, linkcolor=red, citecolor=red}
\usepackage{enumitem}
\usepackage{booktabs}
\usepackage{authblk}

\mathtoolsset{showonlyrefs}

\newtheorem{theorem}{Theorem}[section]

\newtheorem{corollary}[theorem]{Corollary}
\theoremstyle{definition}
\newtheorem{definition}[theorem]{Definition}
\newtheorem{remark}[theorem]{Remark}

\begin{document}
%------------------------------------------------------------------

\title{ \bfseries Viscosity Supersolution Barriers to a\\  Non-local Free Boundary Problem}

\author[2,3,1]{Avetik Arakelyan\thanks{ \texttt{email: avetik.arakelyan@ysu.am}}}
\author[2,3,1]{Lusine Poghosyan\thanks{\texttt{email: lusine@instmath.sci.am}}}

\affil[1]{Yerevan State University, Yerevan, Armenia}
\affil[2]{ Institute of Mathematics of NAS RA, Yerevan, Armenia}
\affil[3]{DiGREA lab, Center for Scientific Innovation and Education, Yerevan, Armenia}
\date{}
\maketitle

\begin{abstract}

We study a parabolic obstacle partial integro-differential equation (PIDE) with a dynamically moving bilateral free boundary. This type of problem arises in the mathematical modeling of speculative asset bubbles with L\'evy jump processes. We investigate the existence of viscosity supersolution barriers within the class of functions exhibiting linear asymptotic growth ($O(|g|)$ at infinity) across three distinct parametric regimes. Our intention is to determine when such a barrier can be constructed by analyzing the balance between the stabilizing local drift, defined by the discount rate $r$ and mean-reversion $\rho$, and the non-local jump dispersion, characterized by the large-jump intensity $\lambda$ and Lipschitz constant $L_\gamma$. First, when $r+\rho > \sqrt{\lambda}L_\gamma$, we prove the global existence of non-negative viscosity supersolutions. Second, in the deficit regime ($r+\rho < \sqrt{\lambda}L_\gamma$), we construct non-negative supersolutions for every finite horizon $T>0$. However, by utilizing an asymptotic slope envelope, we prove that these barriers cannot be bounded by a fixed, pre-determined linear growth ceiling $C_{\max}$ across arbitrarily large horizons; rather, the required linear growth constant must inflate exponentially as the horizon length increases. Finally, at the exact critical boundary ($r+\rho = \sqrt{\lambda}L_\gamma$), we show the existence of a supersolution with a uniform spatial growth bound, provided an additional spatial no-crossing condition holds on the negative tail.

\end{abstract}

\vspace{1em}
\noindent \textbf{Keywords:} Partial integro-differential equations, Viscosity solutions, Free boundary problem, Non-local obstacle problem, Lévy jump processes, Speculative bubbles.

\medskip
\noindent \textbf{Mathematics Subject Classification (2020):} 35R35, 35R09, 35D40, 60G51. 
\medskip

%\tableofcontents
\vspace{1em}

\section{Introduction}

This paper investigates the construction of viscosity supersolution barriers for a non-local free boundary problem. Equations of this class arise naturally in the mathematical modeling of speculative asset bubbles, where the literature has increasingly shifted from classical arbitrage-free martingale frameworks toward equilibrium models driven by heterogeneous beliefs and limits to arbitrage. Based on the seminal works of Miller \cite{Miller1977} and Harrison and Kreps \cite{HarrisonKreps1978}, this approach posits that when short-selling is constrained, the owner of an asset is willing to pay a premium above the fundamental dividend value because they hold an American-style option to resell the asset to a more optimistic investor in the future. 

This microstructural trading mechanism was carried over to continuous-time diffusions by Scheinkman and Xiong \cite{ScheinkmanXiong2003}, and was further studied by Chen and Kohn \cite{ChenKohn2011}. Berestycki, Monneau, and Scheinkman (BMS) \cite{BMS14} subsequently reduced this mechanism to a macroscopic model, demonstrating that the speculative bubble's value satisfies a parabolic free boundary problem governed by a partial differential equation (PDE) with an obstacle. In the BMS framework, the evolution of investor disagreement is modeled by a pure diffusion process, and the bilateral free boundary explicitly represents the optimal trading threshold between the two groups of agents. For the numerical treatment of such problems, we refer to \cite{Arakelyan2019,MR3552318}.

However, relying strictly on continuous diffusion paths fails to capture the sudden, discontinuous shifts in market sentiment that characterize modern financial crises. Empirical evidence overwhelmingly indicates that liquidity shocks and macroeconomic news manifest as abrupt price jumps, generating heavy-tailed return distributions \cite{ContTankov2004}. To capture the true endogenous mechanisms of bubble bursts and sudden optimism, the disagreement process must accommodate discontinuous trajectories.

Motivated by \cite{BMS14}, the recent work \cite{Arakelyan2026arxiv} extended the BMS framework to incorporate L\'evy jump-diffusion dynamics. By applying the It\^o--L\'evy formula to the optimal stopping problem, they demonstrated that the speculative premium satisfies a non-local partial integro-differential equation (PIDE) with a dynamically moving obstacle. Their work established the viscosity solution theory for this non-local free boundary problem. Specifically, because the non-negative obstacle profile $\max(0, \psi(g,t)),$ where $\psi(g,t)$ is a payoff function, is a viscosity subsolution in both the classical BMS PDE \cite[Lemma 4.1]{BMS14} and the non-local L\'evy PIDE \cite[Lemma 5.1]{Arakelyan2026arxiv}, providing a valid bounding viscosity supersolution barrier is a fundamental prerequisite for proving global existence via Perron's method.

Although in \cite{Arakelyan2026arxiv} the authors formulate this non-local free boundary problem with L\'evy jump processes, the precise mathematical conditions for such a bounding barrier to exist were left open. The obstacle profile inherently provides a natural subsolution. To establish a foundation for existence via Perron's method, one needs to construct a valid viscosity supersolution barrier. However, constructing such a barrier is non-trivial. 

The primary goal of this paper is to determine when such a supersolution barrier can be built. We show the conditions under which a globally bounded barrier can be constructed across three distinct regimes. Let the discount rate be denoted by $r$ and mean-reversion by $\rho$. Assume also that the non-local dispersion generated by the L\'evy measure is characterized by the large-jump intensity $\lambda$ and Lipschitz constant $L_\gamma$. 

First, in the stable regime ($r+\rho > \sqrt{\lambda}L_\gamma$), where the drift term dominates the non-local jump dispersion, we prove the global existence of the required viscosity supersolution barrier. We construct this barrier within the class of functions with linear asymptotic growth, using a smooth, piecewise polynomial profile.

Second, we analyze the unstable regime ($r+\rho < \sqrt{\lambda}L_\gamma$). When the jump term dominates, we show that a non-negative supersolution barrier can still be constructed for every finite time horizon $T>0$. However, by analyzing the evolution of the asymptotic slope, we prove that these barriers cannot be uniformly bounded by a fixed, pre-determined linear-growth ceiling $C_{\max}$ across arbitrarily large horizons. Instead of an absolute critical horizon, we establish that the required linear-growth constant $C_T$ must inflate exponentially as the horizon length increases, satisfying a lower bound proportional to 
$$
\sup_{t} e^{\beta t}\alpha(t),
$$ 
where $\beta = \sqrt{\lambda}L_\gamma - (r+\rho) > 0,$ and $\alpha(t)$ is given deterministic time profile. 

Finally, we examine the critical boundary ($r+\rho = \sqrt{\lambda}L_\gamma$). At this threshold, the local drift and non-local dispersion are balanced. We establish the existence of a globally bounded supersolution barrier for this case by imposing a ``no-crossing'' condition on the negative jump tail, providing that negative states cannot jump directly to large positive states. For this case, we were able to construct an asymmetric global supersolution barrier.

The rest of the paper is organized as follows. Section 2 formally defines the jump-diffusion disagreement model, the admissible class of L\'evy measures, and the PIDE free boundary framework. Section 3 contains the main results of the paper. We first prove the global existence of the supersolution barrier in the stable parameter regime. Then, we study the unstable regime, where we construct a supersolution for any finite time horizon and prove that the required linear-growth bound expands exponentially, precluding the existence of a uniformly bounded barrier as $T \to \infty$. Finally, we address the critical boundary case, proving that a globally bounded supersolution barrier still exists when we apply the no-crossing condition to the jump measure.

\section{Preliminaries}

This section introduces the structural assumptions of the model and
formulates the corresponding non-local obstacle problem. We first specify
the payoff function, the L\'evy measure, and the admissible state-dependent
jump amplitude. We then introduce the stochastic dynamics of the
disagreement process, formulate the associated optimal stopping problem,
define the corresponding integro-differential operator, and state the
notion of viscosity solution used throughout the paper.

\subsection{Assumptions and the Payoff Function}
The payoff obtained from reselling the asset depends on the disagreement variable \(g_t\) and on a deterministic time profile $\alpha(t)$. We assume that $\alpha\in C^1([0,T])$,
$\alpha(t)\geq 0$ for all $t\in[0,T]$,  with $\alpha\not\equiv 0$, $\alpha(T)=0$. In addition, we assume that $\alpha$ is non-increasing, that is $\alpha'(t) \leq 0$; this is convenient for the construction of the supersolution, and it reflects the fact that the resale premium decays as maturity approaches.

Let $c > 0$ denote the fixed transaction cost of a trade. The net payoff $\psi(g,t)$ at exercise is the affine function
\begin{equation}\label{eq:payoff}
  \psi(g,t) := g\alpha(t) - c.
\end{equation}
This choice provides the complementarity condition needed for the well-posedness of the free-boundary problem. Indeed, the sum of the payoffs of the two groups at any state $g$ is a negative constant
\begin{equation}\label{eq:obstacle-sum}
  \psi(g,t) + \psi(-g,t) = \bigl(g\alpha(t) - c\bigr) + \bigl(-g\alpha(t) - c\bigr) = -2c < 0.
\end{equation}
This condition implies that the two groups cannot find it optimal to exercise their resale options at the same time, so that the exercise regions are disjoint and no instantaneous trading loop can occur.

 The disagreement process \(g_t\) is governed by the mean-reversion rate
\(\rho \geq 0\), the diffusion volatility \(\sigma > 0\), and the L\'evy
jump measure \(\nu(dz)\). Together with the discount rate \(r > 0\), these
parameters determine the balance between the stabilizing effects of
discounting and mean reversion and the non-local effects generated by the
jumps. The precise relation between these quantities will play a central
role in the supersolution constructions below.

Let
\[
\mathbb{R}_0:=\mathbb{R}\setminus\{0\},
\]
and let \(\nu(dz)\) be a L\'evy measure on \(\mathbb{R}_0\). We denote the intensity of large jumps by
\[
\lambda:=\nu\bigl(\{z\in\mathbb{R}_0:|z|>1\}\bigr)
=\nu\bigl({|z|>1}\bigr)<\infty.
\]
The finiteness of \(\lambda\) follows directly from the defining integrability condition for a L\'evy measure.

We further assume that the state-dependent jump amplitude
\[
\gamma:\mathbb{R}\times\mathbb{R}_0\to\mathbb{R}
\]
is \emph{admissible} if it satisfies the following conditions:
\begin{itemize}
    \item For every \(g\in\mathbb{R}\), the map
\(z\mapsto\gamma(g,z)\) is Borel measurable on \(\mathbb{R}_0\), and, for every
\(z\in\mathbb{R}_0\), the map \(g\mapsto\gamma(g,z)\) is continuous.
\item  There exists a constant \(L_\gamma > 0\) such that
\begin{equation}\label{eq}
\int_{\mathbb{R}_0}
|\gamma(x,z)-\gamma(y,z)|^2\nu(dz)
\leq
L_\gamma^2|x-y|^2,
\; x,y\in\mathbb{R},\;\;
\text{and}
\;\;
\int_{\mathbb{R}_0}
|\gamma(0,z)|^2\nu(dz)
<\infty.
\end{equation}
\item There exists a constant \(K_1>0\) such that
\begin{equation}\label{eq:gamma-small-jump-local-growth}
\sup_{0<|z|\leq1}\frac{|\gamma(g,z)|}{|z|}
\;\leq\;
K_1\bigl(1+|g|\bigr),
\qquad g\in\mathbb{R}.
\end{equation}
\item There exists \(\epsilon\in(0,1)\) such that
\begin{equation}
\limsup_{|g|\to\infty}
\sup_{0<|z|\leq1}
\frac{|\gamma(g,z)|}{|g|}
\leq
1-\epsilon.
\end{equation}
\end{itemize}

\subsection{The Setting of the Problem}

Let \(W\) be a standard Brownian motion and \(N(dt,dz)\) a Poisson random measure with compensator \(\nu(dz)\,dt\) and compensated measure \(\widetilde{N}(dt,dz)\). The disagreement process \(g\) follows the jump-diffusion \cite{Applebaum2009}
\begin{equation}\label{eq:disagreement-sde}
dg_s = -\rho g_{s-}\,ds + \sigma\,dW_s + \int_{0<|z|\leq1} \gamma(g_{s-},z)\,\widetilde{N}(ds,dz) + \int_{|z|>1} \gamma(g_{s-},z)\,N(ds,dz).
\end{equation}

Let \(\mathcal{T}_{t,T}\) denote the set of stopping times, with respect
to the underlying filtration, taking values in \([t,T]\). When an investor exercises the resale option at time \(\tau\), they receive the payoff \(\psi(g_\tau,\tau)\) plus the opposing group's continuation value \(u(-g_\tau,\tau)\). Following \cite{BMS14,Arakelyan2026arxiv}, the bubble value satisfies the coupled optimal stopping problem
\begin{equation}\label{eq:bubble-optimal-stopping}
u(g,t)
:=
\sup_{\tau\in\mathcal{T}_{t,T}}
\mathbb{E}\!\left[
e^{-r(\tau-t)}
\left[
u(-g_\tau,\tau)+\psi(g_\tau,\tau)
\right]^+
\,\middle|\,
g_t=g
\right],
\end{equation}
where
\[
[x]^+:=\max\{x,0\}.
\]
For the general theory of optimal stopping problems for jump-diffusion
processes, we refer to \cite{OksendalSulem2007}. In particular,
\eqref{eq:bubble-optimal-stopping} implies that
\[
u(g,t)\geq0,
\qquad
(g,t)\in\mathbb{R}\times[0,T].
\]
As established in \cite[Theorem~2.1]{Arakelyan2026arxiv}, the stochastic
optimal stopping problem \eqref{eq:bubble-optimal-stopping} is associated
with the non-local obstacle problem
\begin{equation}\label{eq:bubble-pide}
\min\!\left(
-\mathcal{L}^{\nu}u(g,t),
\;
u(g,t)-u(-g,t)-\psi(g,t)
\right)
=0,
\qquad
(g,t)\in\mathbb{R}\times[0,T),
\end{equation}
with terminal condition
\begin{equation}\label{eq:bubble-terminal-condition}
u(g,T)=0,
\qquad
g\in\mathbb{R}.
\end{equation}

For a sufficiently smooth function \(w\) for which the non-local integral
below is well defined, the operator \(\mathcal{L}^{\nu}\) is given by
\begin{equation}\label{eq:forward-operator}
\mathcal{L}^{\nu}w
:=
w_t+\mathcal{A}w-rw,
\end{equation}
where
\begin{equation}\label{eq:spatial-generator}
\begin{aligned}
\mathcal{A}w(g,t)
:&=
-\rho g\,w_g(g,t)
+\frac{\sigma^2}{2}w_{gg}(g,t)\\
&+
\int_{\mathbb{R}_0}
\Bigl[
w\bigl(g+\gamma(g,z),t\bigr)-w(g,t)
-\gamma(g,z)w_g(g,t)\mathbf{1}_{\{|z|\leq1\}}
\Bigr]\nu(dz).
\end{aligned}
\end{equation}
Here the jump amplitude \(\gamma\) satisfies the standing admissibility
conditions introduced above.

The operator in \eqref{eq:forward-operator} is written in its classical
form only to identify the PIDE associated with the stochastic optimal
stopping problem. In jump models, the value function need not possess
the regularity required for a classical interpretation of the associated
PIDE; see, for example, \cite{ContVoltchkova2005}. 

Throughout this paper, the obstacle problem is interpreted in the
viscosity sense. We use the general viscosity framework of
\cite{CrandallIshiiLions1992}, adapted to non-local
integro-differential equations as in \cite{Alvarez1996,BI08}. We next
introduce the corresponding notions of viscosity subsolution,
supersolution, and solution.

\subsection{Viscosity Solutions}
\label{subsubsec:viscosity-solutions}

Classical solutions require \(u\in C^{2,1}\), which need not hold across
the free boundary. We therefore work with viscosity solutions in the sense
of Crandall--Ishii--Lions~\cite{CrandallIshiiLions1992}, adapted to non-local equations
following Barles--Imbert~\cite{BI08}.

The first argument of the minimum in \eqref{eq:bubble-pide} is
\[
-\mathcal{L}^{\nu}u=-u_t-\mathcal{A}u+ru,
\]
which vanishes in the continuation region and is nonnegative in the
exercise region. Since the L\'evy measure may have infinite mass near the
origin, the non-local term is evaluated by splitting the jumps at a radius
\(\delta\in(0,1)\). The test function is used for the small jumps, where
smoothness is needed to control the singularity, while the actual function
is retained for the remaining jumps and in the obstacle. 

For \(\varphi\in C^{2,1}(\mathbb{R}\times[0,T))\), define
\begin{equation}\label{eq:small-jump-operator}
\mathcal{I}^{1,\delta}[\varphi](g,t)
:=
\int_{0<|z|\leq\delta}
\Bigl[
\varphi\bigl(g+\gamma(g,z),t\bigr)-\varphi(g,t)
-\gamma(g,z)\varphi_g(g,t)
\Bigr]\nu(dz),
\end{equation}
and, for a function \(w\),
\begin{equation}\label{eq:large-jump-operator}
\begin{aligned}
\mathcal{I}^{2,\delta}[w,\varphi](g,t)
:=
\int_{|z|>\delta}
\Bigl[
w\bigl(g+\gamma(g,z),t\bigr)-w(g,t)
-\gamma(g,z)\varphi_g(g,t)
\mathbf{1}_{\{|z|\leq1\}}
\Bigr]\nu(dz).
\end{aligned}
\end{equation}
We assume that $\mathcal{I}^{1,\delta}[\varphi]$ and $\mathcal{I}^{2,\delta}[w,\varphi]$ are satisfying \textit{Assumption(NLT)} given in \cite{BI08}. See \cite[Example 2]{BI08} for such appropriate L\'evy operator  that satisfy this Assumption.

We then define the split test operator by
\begin{equation}\label{eq:split-test-operator}
\begin{aligned}
\mathcal{L}^{\nu,\delta}[\varphi,w](g,t)
:&=
\varphi_t(g,t)
-\rho g\,\varphi_g(g,t)
+\frac{\sigma^2}{2}\varphi_{gg}(g,t)
-r w(g,t)\\
&+
\mathcal{I}^{1,\delta}[\varphi](g,t)
+
\mathcal{I}^{2,\delta}[w,\varphi](g,t).
\end{aligned}
\end{equation}
We formulate the viscosity definitions in the class of functions with uniform linear growth, i.e. there exists a constant $C>0$ such that
\begin{equation}\label{eq:linear-growth}
  |u(g,t)| \le C(|g|+1),\;\;(g,t)\in\mathbb{R}\times [0,T].  
\end{equation}

\begin{definition}[Viscosity subsolution]
\label{def:viscosity-subsolution}
An upper semicontinuous function
\(u:\mathbb{R}\times[0,T]\to\mathbb{R}\), satisfying
\eqref{eq:linear-growth}, is a viscosity subsolution of
\eqref{eq:bubble-pide}--\eqref{eq:bubble-terminal-condition} if
\[
u(g,T)\leq0,
\qquad g\in\mathbb{R},
\]
and, whenever \(\varphi\in C^{2,1}(\mathbb{R}\times[0,T))\) is such that
\(u-\varphi\) attains a global maximum at
\((g_0,t_0)\in\mathbb{R}\times[0,T)\), with
\(u(g_0,t_0)=\varphi(g_0,t_0)\), one has, for every
\(\delta\in(0,1)\),
\begin{equation}\label{eq:viscosity-subsolution}
\min\!\left\{
-\mathcal{L}^{\nu,\delta}[\varphi,u](g_0,t_0),
\;
u(g_0,t_0)-u(-g_0,t_0)-\psi(g_0,t_0)
\right\}
\leq0.
\end{equation}
\end{definition}

\begin{definition}[Viscosity supersolution]
\label{def:viscosity-supersolution}
A lower semicontinuous function
\(v:\mathbb{R}\times[0,T]\to\mathbb{R}\), satisfying
\eqref{eq:linear-growth}, is a viscosity supersolution of
\eqref{eq:bubble-pide}--\eqref{eq:bubble-terminal-condition} if
\[
v(g,T)\geq0,
\qquad g\in\mathbb{R},
\]
and, whenever \(\varphi\in C^{2,1}(\mathbb{R}\times[0,T))\) is such that
\(v-\varphi\) attains a global minimum at
\((g_0,t_0)\in\mathbb{R}\times[0,T)\), with
\(v(g_0,t_0)=\varphi(g_0,t_0)\), one has, for every
\(\delta\in(0,1)\),
\begin{equation}\label{eq:viscosity-supersolution}
\min\!\left\{
-\mathcal{L}^{\nu,\delta}[\varphi,v](g_0,t_0),
\;
v(g_0,t_0)-v(-g_0,t_0)-\psi(g_0,t_0)
\right\}
\geq0.
\end{equation}
\end{definition}

\begin{definition}[Viscosity solution]
\label{def:viscosity-solution}
A continuous function
\(u:\mathbb{R}\times[0,T]\to\mathbb{R}\) is a viscosity solution of
\eqref{eq:bubble-pide}--\eqref{eq:bubble-terminal-condition} if it is both
a viscosity subsolution and a viscosity supersolution. 

Throughout the
paper, we consider nonnegative viscosity supersolutions satisfying
\eqref{eq:linear-growth}.
\end{definition}

\section{Main Results}

In this section we distinguish three regimes according to the relation between
\(r+\rho\) and \(\sqrt{\lambda}L_\gamma\). When
\(r+\rho>\sqrt{\lambda}L_\gamma\), we construct a global supersolution
without additional horizon-dependent parameters. When
\(r+\rho<\sqrt{\lambda}L_\gamma\), supersolutions exist for every finite
horizon, but the construction depends explicitly on \(T\), and uniform
linear-growth bounds may fail for large horizons. The critical boundary case
\(r+\rho=\sqrt{\lambda}L_\gamma\) is treated under an additional
no-crossing condition on the large jumps.

\begin{theorem}\label{lem:super-pide} 
Let $\nu(dz)$ be a L\'evy measure on $\mathbb{R}_0$ with finite
large-jump intensity
\(\lambda:=\nu(\{|z|>1\})<\infty\).
Let $\gamma$ be an admissible jump amplitude satisfying
\[
r+\rho>\sqrt{\lambda}\,L_\gamma.
\]
Then there exist constants \(m>1/2\), \(a>0\), and \(K>0\), with \(K\)
sufficiently large, such that the function
\[
\tilde u(g,t):=\alpha(t)f(g),
\qquad
f(g):=h(g)+K,
\]
where
\[
h(g):=
\begin{cases}
\displaystyle
m\left(\frac{3g^2}{4a}-\frac{g^4}{8a^3}\right)+\frac{g}{2},
& |g|\le a,\\[2ex]
\displaystyle
m|g|-\frac{3a}{8}m+\frac{g}{2},
& |g|\ge a,
\end{cases}
\]
is a nonnegative global viscosity supersolution of the obstacle problem \eqref{eq:bubble-pide}--\eqref{eq:bubble-terminal-condition}. Moreover, \(h\in C^2(\mathbb R)\), and therefore
\(f\in C^2(\mathbb R)\).
\end{theorem}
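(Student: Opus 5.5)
Since $\tilde u=\alpha(t)f(g)$ with $\alpha\in C^1$ and $f\in C^2$, the plan is to check the two components of \eqref{eq:viscosity-supersolution} \emph{classically} and then pass to test functions. I would start with the elementary facts about $h$. The values and first two derivatives of the two branches match at $g=\pm a$: the value is $5am/8\pm a/2$, $h'=\pm m+1/2$, and $h''=0$. Hence $h\in C^2$. Moreover $h''(g)=\frac{3m}{2a}\bigl(1-\frac{g^2}{a^2}\bigr)\mathbf 1_{\{|g|\le a\}}$, so $h''$ is bounded and supported in $[-a,a]$. Also $|h'|\le M:=m+\tfrac12$, so $f$ is $M$-Lipschitz. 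Since $m>1/2$, we have $h(g)\ge (m-\tfrac12)|g|-\tfrac{3am}{8}$, so $f\ge0$ once $K\ge 3am/8$. The terminal condition holds because $\alpha(T)=0$, and linear growth is clear.

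\textbf{Obstacle part.} The even part of $h$ cancels, so $h(g)-h(-g)=g$. Therefore
\[
\tilde u(g,t)-\tilde u(-g,t)-\psi(g,t)=\alpha(t)g-\alpha(t)g+c=c>0 .
\]
This component is trivially satisfied for every test function, since it only involves $\tilde u$ itself.

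\textbf{PDE part.} First I reduce from test functions to $\tilde u$. If $\varphi$ touches $\tilde u$ from below at $(g_0,t_0)$, then $\varphi_t=\tilde u_t$, $\varphi_g=\tilde u_g$ and $\varphi_{gg}\le\tilde u_{gg}$ there. Also $\varphi(g_0+\gamma)-\varphi(g_0)\le \tilde u(g_0+\gamma)-\tilde u(g_0)$, so $\mathcal I^{1,\delta}[\varphi]\le\mathcal I^{1,\delta}[\tilde u]$. Hence $-\mathcal L^{\nu,\delta}[\varphi,\tilde u]\ge -\mathcal L^\nu\tilde u$, and it suffices to show $-\mathcal L^\nu\tilde u\ge0$ pointwise. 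Next,
\[
-\mathcal L^\nu\tilde u=-\alpha' f+\alpha\bigl(rf+\rho g f'-\tfrac{\sigma^2}{2}f''-I[f]\bigr),
\]
where $I[f]$ is the jump integral in \eqref{eq:spatial-generator}. The term $-\alpha'f\ge0$ because $\alpha'\le0$ and $f\ge0$, and $\alpha\ge0$. So everything reduces to the stationary inequality
\[
rf+\rho g f'-\tfrac{\sigma^2}{2}f''-I[f]\ge0\quad\text{on }\mathbb R .
\]

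\textbf{Bounding the jump terms.} For large jumps, the $M$-Lipschitz bound and Cauchy--Schwarz with the Lipschitz condition on $\gamma$ give
\[
\int_{|z|>1}|f(g+\gamma)-f(g)|\,\nu(dz)\le M\sqrt{\lambda}\Bigl(\int|\gamma(g,z)|^2\nu(dz)\Bigr)^{1/2}\le M\sqrt\lambda\,(L_\gamma|g|+C_0),
\]
with $C_0=(\int|\gamma(0,z)|^2\nu)^{1/2}$. For small jumps I expect the main obstacle: a naive Taylor bound with \eqref{eq:gamma-small-jump-local-growth} gives growth like $(1+|g|)^2$, which is too much. The fix is the last admissibility condition. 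For $|g|\ge R$ large, $|\gamma(g,z)|\le(1-\epsilon/2)|g|$ for $|z|\le1$, so $g+\gamma$ keeps the sign of $g$ and satisfies $|g+\gamma|\ge \epsilon|g|/2>a$. Since $f$ is affine on each half-line $\{\pm g\ge a\}$, the small-jump integrand vanishes identically there. For $|g|\le R$, the Taylor estimate with $\sup|f''|\le 3m/(2a)$ and \eqref{eq:gamma-small-jump-local-growth} gives a bound $C_R$ that is independent of $K$.

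\textbf{Choosing the constants.} For $|g|\ge\max(a,R)$, I use $rf\ge r(m-\tfrac12)|g|$, $\rho g f'=\rho(m|g|+g/2)\ge\rho(m-\tfrac12)|g|$ and $f''=0$. The left side is then at least
\[
\bigl[(r+\rho)(m-\tfrac12)-(m+\tfrac12)\sqrt\lambda L_\gamma\bigr]|g|-C .
\]
Because $r+\rho>\sqrt\lambda L_\gamma$, I choose $m$ so large that $(m+\tfrac12)/(m-\tfrac12)<(r+\rho)/(\sqrt\lambda L_\gamma)$, which makes the bracket positive. On the remaining compact set, all terms except $rK$ are bounded independently of $K$: the jump terms involve only differences of $f$, and $\rho gf'$ and $f''$ are bounded. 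Finally I take $K$ large enough to absorb these bounds, and also the constant $C$ on the region where the bracket times $|g|$ does not yet dominate. This yields $-\mathcal L^\nu\tilde u\ge0$ everywhere.
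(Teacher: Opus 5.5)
Your proposal is correct and follows essentially the same route as the paper. It uses the same $C^2$ and obstacle checks, the Lipschitz/Cauchy--Schwarz/Minkowski bound for large jumps, vanishing of the small-jump term on the tails via the $(1-\epsilon)$ condition, the same choice of $m$ (your ratio condition is exactly the paper's $\delta_m>0$), and absorption of the compact region by $K$. Your variations are all valid: you discard $-\alpha' f\ge0$ to reduce to a stationary inequality, take $K\ge 3am/8$ for nonnegativity, and explicitly justify the passage to test functions, where at $t_0=0$ one only gets $\varphi_t\le\tilde u_t$, which has the favorable sign.
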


\begin{proof}
We first verify that $h\in C^2(\mathbb{R})$. By definition, $h$ is smooth on each of the open intervals $(-\infty,-a)$, $(-a,a)$, and $(a,\infty)$. At the right junction $g=a$, evaluating the interior and exterior formulas shows continuity:
\[
h(a^-)
=
m\left(\frac{3a^2}{4a}-\frac{a^4}{8a^3}\right)+\frac{a}{2}
=
\frac{5ma}{8}+\frac{a}{2}
=
ma-\frac{3a}{8}m+\frac{a}{2}
=
h(a^+).
\]
Similarly, continuity holds at the left junction $g=-a$:
\[
h((-a)^+)
=
m\left(\frac{3a^2}{4a}-\frac{a^4}{8a^3}\right)-\frac{a}{2}
=
\frac{5ma}{8}-\frac{a}{2}
=
ma-\frac{3a}{8}m-\frac{a}{2}
=
h((-a)^-).
\]
We now compute the first and second derivatives. On the interior region $|g|<a$, we have
\begin{equation}\label{eq:h-derivatives-interior}
h'(g)
=
m\left(\frac{3g}{2a}-\frac{g^3}{2a^3}\right)+\frac{1}{2},
\qquad
h''(g)
=
\frac{3m}{2a}\left(1-\frac{g^2}{a^2}\right),
\end{equation}
while on the exterior regions,
\begin{equation}\label{eq:h-derivatives-exterior}
h'(g)
=
\begin{cases}
\displaystyle m+\frac{1}{2}, & g>a,\\[1.5ex]
\displaystyle -m+\frac{1}{2}, & g<-a,
\end{cases}
\qquad
h''(g)=0.
\end{equation}
Evaluating the one-sided limits at $g=a$, we obtain
\[
h'(a^-)=m+\frac{1}{2}=h'(a^+),
\qquad
h''(a^-)=0=h''(a^+).
\]
Similarly, at $g=-a$,
\[
h'((-a)^+)=-m+\frac{1}{2}=h'((-a)^-),
\qquad
h''((-a)^+)=0=h''((-a)^-).
\]
Therefore, $h$, $h'$, and $h''$ are continuous across the junction points $g=\pm a$, and hence $h\in C^2(\mathbb{R})$. Since $f=h+K$, we also have $f\in C^2(\mathbb{R})$.

Next, we verify the obstacle condition. By construction, $h(g)-h(-g)=g$. Since the constant $K$ cancels in the difference, we also have $f(g)-f(-g)=g$. Therefore,
\[
\tilde{u}(g,t)-\tilde{u}(-g,t)
=
\alpha(t)\bigl(f(g)-f(-g)\bigr)
=
\alpha(t)g.
\]
Since $c>0$, it follows that
\[
\tilde{u}(g,t)-\tilde{u}(-g,t)
=
\alpha(t)g
\geq
\alpha(t)g-c
=
\psi(g,t).
\]
Hence, the obstacle condition is satisfied for all $(g,t)\in\mathbb{R}\times[0,T]$.

We next verify the supersolution inequality involving the operator $\mathcal{L}^{\nu}$. For convenience, we separate its local and non-local parts. For a sufficiently smooth function $w$, define
\[
\mathcal{P}w(g,t)
:=
w_t(g,t)
-\rho g\,w_g(g,t)
+\frac{\sigma^2}{2}w_{gg}(g,t)
-rw(g,t),
\]
and
\[
Jw(g,t)
:=
\int_{\mathbb{R}_0}
\Bigl[
w\bigl(g+\gamma(g,z),t\bigr)-w(g,t)
-\gamma(g,z)w_g(g,t)\mathbf{1}_{\{|z|\leq1\}}
\Bigr]\nu(dz).
\]
Then $\mathcal{L}^{\nu}w=\mathcal{P}w+Jw$.

Since $\tilde{u}(g,t)=\alpha(t)f(g)$, we have $\tilde{u}_t(g,t)=\alpha'(t)f(g)$, $\tilde{u}_g(g,t)=\alpha(t)f'(g)$, and $\tilde{u}_{gg}(g,t)=\alpha(t)f''(g)$. Therefore,
\[
-\mathcal{P}\tilde{u}(g,t)
=
-\alpha'(t)f(g)
+\rho g\,\alpha(t)f'(g)
-\frac{\sigma^2}{2}\alpha(t)f''(g)
+r\alpha(t)f(g).
\]
Moreover, since the constant $K$ cancels from the jump increments and $f=h+K$, we have
\begin{equation}\label{eq:J-tilde-u}
J\tilde{u}(g,t)
=
\alpha(t)Jh(g),
\end{equation}
where
\[
Jh(g)
:=
\int_{\mathbb{R}_0}
\Bigl[
h\bigl(g+\gamma(g,z)\bigr)-h(g)
-\gamma(g,z)h'(g)\mathbf{1}_{\{|z|\leq1\}}
\Bigr]\nu(dz).
\]
Hence, $-\mathcal{L}^{\nu}\tilde{u}=-\mathcal{P}\tilde{u}-J\tilde{u}$.

We estimate the local part $-\mathcal{P}\tilde{u}$ and the non-local part $J\tilde{u}$ separately.
Because $f=h+K$, we have $f'=h'$ and $f''=h''$. 
On the positive tail $g\geq a$, using \eqref{eq:h-derivatives-exterior}, we obtain
\begin{equation}\label{eq:P-positive-tail}
-\mathcal{P}\tilde{u}
=
\left(m+\frac{1}{2}\right)
\bigl((r+\rho)\alpha(t)-\alpha'(t)\bigr)g
+
\bigl(r\alpha(t)-\alpha'(t)\bigr)
\left(K-\frac{3a}{8}m\right).
\end{equation}
Similarly, on the negative tail $g\leq-a$, since $g<0$, we obtain
\begin{equation}\label{eq:P-negative-tail}
-\mathcal{P}\tilde{u}
=
\left(m-\frac{1}{2}\right)
\bigl((r+\rho)\alpha(t)-\alpha'(t)\bigr)|g|
+
\bigl(r\alpha(t)-\alpha'(t)\bigr)
\left(K-\frac{3a}{8}m\right).
\end{equation}
We now split the non-local operator into its small- and large-jump parts. Define
\[
J_{\rm small}h(g)
:=
\int_{\{0<|z|\leq1\}}
\Bigl[
h\bigl(g+\gamma(g,z)\bigr)-h(g)
-\gamma(g,z)h'(g)
\Bigr]\nu(dz),
\]
and
\[
J_{\rm large}h(g)
:=
\int_{\{|z|>1\}}
\Bigl[
h\bigl(g+\gamma(g,z)\bigr)-h(g)
\Bigr]\nu(dz).
\]
Then $Jh(g)=J_{\rm small}h(g)+J_{\rm large}h(g)$, and by \eqref{eq:J-tilde-u},
\[
J\tilde{u}(g,t)
=
\alpha(t)
\left(
J_{\rm small}h(g)
+
J_{\rm large}h(g)
\right).
\]
By \eqref{eq:h-derivatives-interior} and \eqref{eq:h-derivatives-exterior}, $\|h'\|_{\infty}=m+\frac{1}{2}$. Hence $h$ is globally Lipschitz, and
\[
\left|
h\bigl(g+\gamma(g,z)\bigr)-h(g)
\right|
\leq
\left(m+\frac{1}{2}\right)|\gamma(g,z)|.
\]
Therefore,
\[
J_{\rm large}h(g)
\leq
\left(m+\frac{1}{2}\right)
\int_{\{|z|>1\}}
|\gamma(g,z)|\,\nu(dz).
\]
By the Cauchy--Schwarz inequality and $\lambda=\nu(\{|z|>1\})$, we obtain
\[
\int_{\{|z|>1\}}
|\gamma(g,z)|\,\nu(dz)
\leq
\sqrt{\lambda}
\left(
\int_{\{|z|>1\}}
|\gamma(g,z)|^2\,\nu(dz)
\right)^{1/2}.
\]
Set
\begin{equation}\label{C_large}
C_{\rm large}
:=
\left(
\int_{\{|z|>1\}}
|\gamma(0,z)|^2\,\nu(dz)
\right)^{1/2}.
\end{equation}
By the admissibility assumption on $\gamma$, we have $C_{\rm large}<\infty$. Furthermore, Minkowski's inequality gives
\[
\begin{aligned}
\left(
\int_{\{|z|>1\}}
|\gamma(g,z)|^2\,\nu(dz)
\right)^{1/2}
\leq{}&
\left(
\int_{\{|z|>1\}}
|\gamma(g,z)-\gamma(0,z)|^2\,\nu(dz)
\right)^{1/2}
+
C_{\rm large}
\\
\leq{}&
L_\gamma|g|+C_{\rm large},
\end{aligned}
\]
where in the last inequality we used the $L^2(\nu)$-Lipschitz condition for $\gamma$. Combining these estimates yields
\begin{equation}\label{eq:J-large-estimate}
J_{\rm large}h(g)
\leq
\left(m+\frac{1}{2}\right)
\sqrt{\lambda}
\left(
L_\gamma|g|+C_{\rm large}
\right).
\end{equation}
We next estimate the small-jump part. By Taylor's formula,
\[
h\bigl(g+\gamma(g,z)\bigr)-h(g)-\gamma(g,z)h'(g)
=
\gamma(g,z)^2
\int_0^1
(1-\theta)
h''\bigl(g+\theta\gamma(g,z)\bigr)\,d\theta.
\]
By \eqref{eq:h-derivatives-interior} and \eqref{eq:h-derivatives-exterior}, we have $0\leq h''(x)\leq \frac{3m}{2a}$, and $h''(x)=0$ whenever $|x|\geq a$. Therefore,
\[
0
\leq
h\bigl(g+\gamma(g,z)\bigr)-h(g)-\gamma(g,z)h'(g)
\leq
\frac{3m}{4a}\gamma(g,z)^2
\mathbf{1}_{\{
\{g+\theta\gamma(g,z):0\leq\theta\leq1\}
\cap(-a,a)\neq\varnothing
\}}.
\]
It follows that
\[
J_{\rm small}h(g)
\leq
\frac{3m}{4a}
\int_{\{0<|z|\leq1\}}
\gamma(g,z)^2
\mathbf{1}_{\{
\{g+\theta\gamma(g,z):0\leq\theta\leq1\}
\cap(-a,a)\neq\varnothing
\}}
\,\nu(dz).
\]
We now use the small-jump asymptotic condition. Since $\limsup_{|g|\to\infty}\sup_{0<|z|\leq1}\frac{|\gamma(g,z)|}{|g|}\leq1-\epsilon$, there exists $R_1>0$ such that
\[
|\gamma(g,z)|
\leq
\left(1-\frac{\epsilon}{2}\right)|g|
\]
for all $|g|\geq R_1$ and $0<|z|\leq1$. Set
\[
R
:=
\max\left\{
R_1,\frac{2a}{\epsilon}
\right\}.
\]
Then, for $|g|\geq R$, $0<|z|\leq1$, and $\theta\in[0,1]$,
\[
\left|g+\theta\gamma(g,z)\right|
\geq
|g|-\theta|\gamma(g,z)|
\geq
|g|-|\gamma(g,z)|
\geq
\frac{\epsilon}{2}|g|
\geq
a.
\]
Hence the segment $\{g+\theta\gamma(g,z):0\leq\theta\leq1\}$ does not intersect $(-a,a)$. Therefore,
\begin{equation}\label{eq:J-small-estimate}
J_{\rm small}h(g)=0,
\qquad |g|\geq R.
\end{equation}
Combining \eqref{eq:J-tilde-u}, \eqref{eq:J-large-estimate}, and \eqref{eq:J-small-estimate}, we obtain
\begin{equation}\label{eq:J-tail-estimate}
J\tilde{u}(g,t)
\leq
\alpha(t)\left(m+\frac{1}{2}\right)
\sqrt{\lambda}
\left(
L_\gamma|g|+C_{\rm large}
\right),
\qquad |g|\geq R.
\end{equation}
For $g\geq R$, using the local estimate \eqref{eq:P-positive-tail} and
the tail estimate \eqref{eq:J-tail-estimate}, we obtain
\[
\begin{aligned}
-\mathcal{L}^{\nu}\tilde{u}(g,t)
&\geq
\alpha(t)\left(m+\frac{1}{2}\right)
\left[
\left(r+\rho-\sqrt{\lambda}L_\gamma\right)g
-\sqrt{\lambda}\,C_{\rm large}
\right]
\\
&+
\bigl(r\alpha(t)-\alpha'(t)\bigr)
\left(K-\frac{3a}{8}m\right).
\end{aligned}
\]
By assumption,
$r+\rho-\sqrt{\lambda}L_\gamma>0$. Since $C_{\rm large}<\infty$, the
positive linear term in $g$ dominates the constant
$\sqrt{\lambda}\,C_{\rm large}$ for sufficiently large $g$. Thus, enlarging
$R$ if necessary so that
\[
R
\geq
\frac{\sqrt{\lambda}\,C_{\rm large}}
{r+\rho-\sqrt{\lambda}L_\gamma},
\]
we obtain
\[
\left(r+\rho-\sqrt{\lambda}L_\gamma\right)g
-\sqrt{\lambda}\,C_{\rm large}
\geq0,
\qquad g\geq R.
\]
We now consider the negative tail. Choose $m>1/2$ sufficiently large so
that
\[
\delta_m
:=
\left(m-\frac{1}{2}\right)(r+\rho)
-
\left(m+\frac{1}{2}\right)\sqrt{\lambda}L_\gamma
>0.
\]
Such a choice is possible because
$r+\rho>\sqrt{\lambda}L_\gamma$. Indeed, the above inequality is equivalent
to
\[
m>
\frac{(r+\rho)+\sqrt{\lambda}L_\gamma}
{2\bigl((r+\rho)-\sqrt{\lambda}L_\gamma\bigr)}.
\]
For $g\leq-R$, using \eqref{eq:P-negative-tail} and
\eqref{eq:J-tail-estimate}, we then obtain
\[
\begin{aligned}
-\mathcal{L}^{\nu}\tilde{u}(g,t)
&\geq
\alpha(t)
\left[
\delta_m |g|
-
\left(m+\frac{1}{2}\right)
\sqrt{\lambda}\,C_{\rm large}
\right]
\\
&+
\bigl(r\alpha(t)-\alpha'(t)\bigr)
\left(K-\frac{3a}{8}m\right).
\end{aligned}
\]
Since $\delta_m>0$ and $C_{\rm large}<\infty$, we may enlarge $R$ once
more, if necessary, so that
\[
R
\geq
\frac{\left(m+\frac{1}{2}\right)\sqrt{\lambda}\,C_{\rm large}}
{\delta_m}.
\]
Then
\[
\delta_m|g|
-
\left(m+\frac{1}{2}\right)\sqrt{\lambda}\,C_{\rm large}
\geq0,
\qquad g\leq-R.
\]
Thus, after enlarging $R$ if necessary, the terms multiplied by
$\alpha(t)$ are nonnegative on both tails. The constant $K$ will be chosen
later to control the remaining terms and the bounded spatial region.

It remains to control the operator on the bounded region $|g|\leq R$ and to make the final choice of $K$. From the definition of $\mathcal{P}$ and $J$, we can write
\[
-\mathcal{L}^{\nu}\tilde{u}(g,t)
=
\bigl(r\alpha(t)-\alpha'(t)\bigr)\bigl(h(g)+K\bigr)
+
\alpha(t)
\left[
\rho g h'(g)
-\frac{\sigma^2}{2}h''(g)
-Jh(g)
\right].
\]
On the interval $[-R,R]$, the functions $h$, $h'$, and $h''$ are bounded. Moreover, $Jh$ is bounded on $[-R,R]$. Indeed, by \eqref{eq:h-derivatives-interior}
and \eqref{eq:h-derivatives-exterior},
\[
\|h''\|_\infty\leq \frac{3m}{2a}.
\]
Hence, by Taylor's formula,
\[
|J_{\rm small}h(g)|
\leq
\frac{3m}{4a}
\int_{\{0<|z|\leq1\}}
|\gamma(g,z)|^2\,\nu(dz).
\]
Using Minkowski's inequality and the admissibility assumptions,
\[
\left(
\int_{\{0<|z|\leq1\}}
|\gamma(g,z)|^2\,\nu(dz)
\right)^{1/2}
\leq
L_\gamma |g|+C_0,
\]
where
\[
C_0:=
\left(
\int_{\{0<|z|\leq1\}}
|\gamma(0,z)|^2\,\nu(dz)
\right)^{1/2}
<\infty.
\]
Therefore, for $|g|\leq R$,
\[
|J_{\rm small}h(g)|
\leq
\frac{3m}{4a}(L_\gamma R+C_0)^2.
\]
Together with \eqref{eq:J-large-estimate}, this shows that $Jh$ is
bounded on $[-R,R]$. Hence we may define
\[
M_R
:=
\sup_{|g|\leq R}
\left|
\rho g h'(g)
-\frac{\sigma^2}{2}h''(g)
-Jh(g)
\right|
<\infty
\]
and
\[
H_R
:=
\max_{|g|\leq R}|h(g)|
<\infty.
\]
It follows that, for $|g|\leq R$,
\[
\begin{aligned}
-\mathcal{L}^{\nu}\tilde{u}(g,t)
&\geq
\bigl(r\alpha(t)-\alpha'(t)\bigr)K
-
\bigl(r\alpha(t)-\alpha'(t)\bigr)H_R
-
\alpha(t)M_R.
\end{aligned}
\]
Since $\alpha'(t)\leq0$, we have
\[
-\mathcal{L}^{\nu}\tilde{u}(g,t)
\geq
\bigl(r\alpha(t)-\alpha'(t)\bigr)
\left(
K-H_R-\frac{M_R}{r}
\right),
\qquad |g|\leq R.
\]
Thus, choosing
\[
K
\geq
H_R+\frac{M_R}{r}
\]
implies that
\[
-\mathcal{L}^{\nu}\tilde{u}(g,t)\geq0,
\qquad |g|\leq R.
\]
We now make the final choice of $K$. Recall that on both tails the estimates obtained above contain the term
\[
\bigl(r\alpha(t)-\alpha'(t)\bigr)
\left(K-\frac{3a}{8}m\right).
\]
Therefore, it is enough to require in addition that
\[
K\geq\frac{3a}{8}m.
\]
With this choice, together with the preceding choices of $m$ and $R$, we have
\[
-\mathcal{L}^{\nu}\tilde{u}(g,t)\geq0
\]
for all $(g,t)\in\mathbb{R}\times[0,T)$.

Finally, we verify the nonnegativity of $\tilde{u}$. Since $m>1/2$, the definition of $h$ on the two tails gives
$
h(g)\longrightarrow+\infty$ as  $|g|\longrightarrow\infty.$
Hence $h$, being continuous on $\mathbb{R}$, is bounded from below and attains a finite global minimum. Set
\[
K_{\rm nonneg}
:=
\max\left\{
0,
-\min_{g\in\mathbb{R}}h(g)
\right\}.
\]
Then $K\geq K_{\rm nonneg}$ implies
$f(g)=h(g)+K\geq0,
\;\; g\in\mathbb{R}.$
Since $\alpha(t)\geq0$, it follows that
\[
\tilde{u}(g,t)
=
\alpha(t)f(g)
\geq0,
\qquad
(g,t)\in\mathbb{R}\times[0,T].
\]
We may therefore choose $K$ sufficiently large so that
\[
K
\geq
\max\left\{
H_R+\frac{M_R}{r},
\frac{3a}{8}m,
K_{\rm nonneg}
\right\}.
\]
For this choice of $K$, the function $\tilde{u}$ is nonnegative and satisfies
\[
-\mathcal{L}^{\nu}\tilde{u}\geq0
\]
throughout $\mathbb{R}\times[0,T)$. Together with the obstacle inequality established above and the terminal
condition
\[
\widetilde{u}(g,T)=0,
\]
which follows from $\alpha(T)=0$, and since
$\widetilde{u}\in C^{2,1}(\mathbb{R}\times[0,T])$ with the non-local
integral well defined, the classical inequalities imply the corresponding
viscosity supersolution inequalities. Hence $\widetilde{u}$ is a
nonnegative global viscosity supersolution of \eqref{eq:bubble-pide}--\eqref{eq:bubble-terminal-condition}.
\end{proof}

  \begin{remark}
Observe that the spatial profile $f$ and the constants appearing in the
construction of $\widetilde{u}(g,t)=\alpha(t)f(g)$ do not require any
additional horizon-dependent parameters. In particular, for each finite
horizon $T>0$ and each admissible profile $\alpha$ on $[0,T]$, the same
spatial construction applies.
\end{remark}

\begin{remark}
    We note that in \cite[Lemma 5.2]{Arakelyan2026arxiv}, the authors derive a bound with an extra $\frac{1}{2}L^2_\gamma$ term. This difference stems from the larger class of admissible jump amplitudes $\gamma$ treated in their work, as they do not assume the condition
    \begin{equation}
        \limsup_{|g|\to\infty}
        \sup_{0<|z|\leq1}
        \frac{|\gamma(g,z)|}{|g|}
        \leq
        1-\epsilon.
    \end{equation}
\end{remark}

%%%%%%%%%%%%%%%%%%%%%%%%%%%%%%%%%%%%%%%%%%%%%%%%%%%%%%%%%%%%%%%%%%%%%%%%
\begin{theorem}\label{lem:finite-horizon-existence}
Let $\nu(dz)$ be a L\'evy measure on $\mathbb{R}_0$ with finite
large-jump intensity
$\lambda:=\nu(\{|z|>1\})<\infty.$
Let $\gamma$ be an admissible jump amplitude satisfying
$r+\rho<\sqrt{\lambda}\,L_\gamma$, and set
\begin{equation}\label{beta}
\beta:=\sqrt{\lambda}\,L_\gamma-(r+\rho)>0.
\end{equation}
Then, for every finite horizon $T>0$, there exists a nonnegative global
viscosity supersolution
\[
v^T:\mathbb{R}\times[0,T]\to[0,\infty)
\]
of the PIDE \eqref{eq:bubble-pide}.
\end{theorem}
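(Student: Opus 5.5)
The plan is to reuse the profile of Theorem~\ref{lem:super-pide}, but to multiply it by an exponentially growing time weight that absorbs the deficit $\beta$. Concretely, I will look for a barrier of the form
\[
v^T(g,t):=\alpha(t)\,e^{\kappa (T-t)}\,f(g),\qquad f=h+K,
\]
with $h$ exactly as in Theorem~\ref{lem:super-pide} and $\kappa>0$, $m>1/2$, $K>0$ still to be chosen. Equivalently, I will replace the discount rate $r$ by an effective rate $r+\kappa$.

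Since $\partial_t\bigl(\alpha e^{\kappa(T-t)}\bigr)=(\alpha'-\kappa\alpha)e^{\kappa(T-t)}$, the local part becomes
\[
-\mathcal{P}v^T=e^{\kappa(T-t)}\Bigl[\bigl((r+\kappa)\alpha-\alpha'\bigr)f+\alpha\bigl(\rho g f'-\tfrac{\sigma^2}{2}f''\bigr)\Bigr].
\]
The non-local part scales by the same factor, $Jv^T=e^{\kappa(T-t)}\alpha\,Jh$. Hence every estimate from the proof of Theorem~\ref{lem:super-pide} carries over verbatim, with $r$ replaced by $r+\kappa$ in the coefficients multiplying $\alpha f$.

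The key steps, in order, are as follows.

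\textbf{(i) Obstacle and terminal condition.} The identity $h(g)-h(-g)=g$ gives $v^T(g,t)-v^T(-g,t)=\alpha(t)e^{\kappa(T-t)}g$. This is $\ge \alpha(t)g-c$ only when $g\ge0$, because for $g<0$ the factor $e^{\kappa(T-t)}\ge1$ makes the left side more negative. To repair this, I will instead use the weight $w(t):=\alpha(t)+\alpha(0)\bigl(e^{\kappa(T-t)}-1\bigr)$, or rather put the growth only into the symmetric part. Concretely,
\[
v^T=\alpha(t)\bigl(\tfrac g2\bigr)+e^{\kappa(T-t)}\alpha(t)\bigl(h(g)-\tfrac g2+K\bigr).
\]
The antisymmetric part is then exactly $\alpha g$, while the symmetric part $m\,\phi(|g|)+K$ carries the exponential weight. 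Since $\alpha(T)=0$, the terminal condition $v^T(\cdot,T)=0$ holds, and nonnegativity follows as before once $m>1/2$ and $K\ge K_{\rm nonneg}$.

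\textbf{(ii) Tails.} For $|g|\ge R$, the small-jump part vanishes by \eqref{eq:J-small-estimate}. The large-jump part is bounded by \eqref{eq:J-large-estimate}, now with Lipschitz constant $e^{\kappa(T-t)}m+\tfrac12$. The local part yields the coefficient
\[
\bigl((r+\rho+\kappa)\alpha-\alpha'\bigr)e^{\kappa(T-t)}m\,|g|\pm\tfrac12\bigl((r+\rho)\alpha-\alpha'\bigr)g.
\]
I will choose $\kappa>\beta$, so that $r+\rho+\kappa>\sqrt\lambda L_\gamma$. Then I take $m$ large, as in the choice of $\delta_m$, so that the symmetric slope dominates both the antisymmetric slope $\tfrac12$ and the jump dispersion $(e^{\kappa(T-t)}m+\tfrac12)\sqrt\lambda L_\gamma$. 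Because $e^{\kappa(T-t)}\ge1$, the $\tfrac12$-terms are dominated uniformly in $t$. The constants $R$, $m$ and $K$ depend on $T$ only through bounded factors $e^{\kappa T}$, which is acceptable on a finite horizon.

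\textbf{(iii) Compact region.} On $|g|\le R$, I will bound $|\rho g h'-\tfrac{\sigma^2}{2}h''-Jh|$ by $M_R$, exactly as in Theorem~\ref{lem:super-pide}. The effective rate $r+\kappa\ge r>0$ then lets me choose $K\ge H_R+M_R/r$ times the ratio $e^{\kappa T}$ where needed.

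\textbf{(iv) Conclusion.} Since $v^T\in C^{2,1}$ and the classical inequalities hold, the viscosity inequalities follow.

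\textbf{Main obstacle.} The hard part is step (i)/(ii): the antisymmetric part $\alpha g/2$ cannot be exponentially weighted without breaking the obstacle inequality for $g<0$. Its unweighted slope $\tfrac12$ must therefore be beaten in the operator inequality. I expect to handle this by choosing $m$ (and possibly $\kappa$) large, since the weighted symmetric slope grows while the defect $\tfrac12(\sqrt\lambda L_\gamma-(r+\rho))=\tfrac12\beta$ stays fixed. The growth constant of $v^T$ is then of order $m e^{\kappa T}$, which is consistent with the exponential inflation announced in the abstract.
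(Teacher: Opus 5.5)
Your final construction, $v^T=\frac{\alpha(t)}{2}g+\alpha(t)e^{\kappa(T-t)}\bigl(m\Phi_a(g)+K\bigr)$ with $\kappa>\beta$ and $m$ large, is correct and is essentially the paper's ansatz: the even profile $\Phi_a$ carries an exponential time weight with rate exceeding $\beta$, the odd part $\frac{\alpha}{2}g$ is left unweighted so the obstacle identity survives, and the tails and compact region are handled by the same estimates; the only difference is that you use the weight $w(t)=\alpha(t)e^{\kappa(T-t)}$ instead of the paper's $q(t)=Ae^{-\eta t}$. Your weight is slightly cleaner, because $w\geq\alpha$ holds automatically and $-w'=e^{\kappa(T-t)}(\kappa\alpha-\alpha')$ absorbs the negative $\frac{1}{2}\alpha'|g|$ term on the negative tail once $m>1/2$, so you need no analogue of $B_\alpha$ or of the conditions on $A$, and $v^T(\cdot,T)=0$ exactly; note, however, that on the negative tail the odd-part defect to beat is $\frac{1}{2}\bigl(r+\rho+\sqrt{\lambda}L_\gamma\bigr)\alpha$ rather than $\frac{1}{2}\beta\alpha$, which large $m$ handles just as in the choice of $\delta_m$.
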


\begin{proof}
We first introduce the spatial profile used in the construction. Fix $a>0$ and define
\[
\Phi_a(g)
:=
\begin{cases}
\displaystyle
\frac{3g^2}{4a}-\frac{g^4}{8a^3},
& |g|\leq a,\\[2ex]
\displaystyle
|g|-\frac{3a}{8},
& |g|\geq a.
\end{cases}
\]
As in the construction of Theorem \ref{lem:super-pide}, the interior
and exterior formulas, together with their first and second derivatives,
match at $g=\pm a$. Hence $\Phi_a\in C^2(\mathbb{R})$. Moreover,
\[
\|\Phi_a'\|_\infty=1,
\qquad
\Phi_a''(g)\geq0,
\qquad
0\leq\Phi_a(g)\leq |g|,
\qquad g\in\mathbb{R}.
\]
Recall that $\beta=\sqrt{\lambda}\,L_\gamma-(r+\rho)>0$.
Choose $\eta>\beta$ and set
\begin{equation}\label{d=eta-beta}
d:=\eta-\beta>0.
\end{equation}
Next, choose
\[
m>
\max\left\{
\frac12,\frac{\beta}{2d}
\right\},
\]
and define $q(t):=Ae^{-\eta t}$, where $A>0$ will be chosen below. For
$K>0$, also to be chosen later, define
\begin{equation}\label{eq:candidate-profile}
v^{T}(g,t)
:=
m q(t)\Phi_a(g)
+\frac{\alpha(t)}{2}g
+Kq(t).
\end{equation}
Since $\Phi_a\in C^2(\mathbb{R})$, $\alpha\in C^1([0,T])$, and
$q\in C^1([0,T])$, we have $v^T\in C^{2,1}(\mathbb{R}\times[0,T])$.
Moreover, since $\Phi_a(g)\leq |g|$ and $\alpha$ and $q$ are bounded on
$[0,T]$, the function $v^T$ satisfies the linear-growth condition
\eqref{eq:linear-growth}.

We now choose $A>0$ sufficiently large so that
\begin{equation}\label{cond-1: A}
  A\geq\max_{t\in[0,T]}e^{\eta t}\alpha(t).  
\end{equation}
Then $q(t)=Ae^{-\eta t}\geq\alpha(t)$ for all $t\in[0,T]$.

Now, we  verify the nonnegativity of $v^T$. Since $q(t)\geq\alpha(t)$, we have
\[
v^T(g,t)
=
m q(t)\Phi_a(g)
+
\frac{\alpha(t)}{2}g
+
Kq(t)
\geq
q(t)
\left(
m\Phi_a(g)-\frac{|g|}{2}+K
\right).
\]
Because $m>\frac{1}{2}$ and $\Phi_a(g)=|g|-\frac{3a}{8}$ for $|g|\geq a$,
the function $m\Phi_a(g)-\frac{|g|}{2}$ tends to $+\infty$ as
$|g|\to\infty$. Hence, being continuous on $\mathbb{R}$, it is bounded
from below and attains a finite global minimum. Set
\[
K_{\rm nonneg}
:=
\max\left\{
0,\,
-\min_{g\in\mathbb{R}}
\left(
m\Phi_a(g)-\frac{|g|}{2}
\right)
\right\}.
\]
Then $K\geq K_{\rm nonneg}$ implies $v^T(g,t)\geq0$ for all
$(g,t)\in\mathbb{R}\times[0,T]$.

Next, we verify the obstacle condition. Since $\Phi_a$ is even, we have
$\Phi_a(g)-\Phi_a(-g)=0$. Moreover, the term $Kq(t)$ cancels in the
difference. Therefore, $v^T(g,t)-v^T(-g,t)=\alpha(t)g$. Since $c>0$, it
follows that
\[
v^T(g,t)-v^T(-g,t)
=
\alpha(t)g
\geq
\alpha(t)g-c
=
\psi(g,t).
\]
Hence, the obstacle condition is satisfied for all
$(g,t)\in\mathbb{R}\times[0,T]$.

We next estimate the large-jump part of the non-local operator. Since
\[
\partial_g v^T(g,t)
=
m q(t)\Phi_a'(g)+\frac{\alpha(t)}{2}
\]
and $\|\Phi_a'\|_\infty=1$, we have
\[
\left|\partial_g v^T(g,t)\right|
\leq
m q(t)+\frac{\alpha(t)}{2}.
\]
Hence, by the mean value theorem,
\[
\left|
v^T\bigl(g+\gamma(g,z),t\bigr)-v^T(g,t)
\right|
\leq
\left(
m q(t)+\frac{\alpha(t)}{2}
\right)
|\gamma(g,z)|.
\]
Therefore,
\[
J_{\rm large}v^T(g,t)
\leq
\left(
m q(t)+\frac{\alpha(t)}{2}
\right)
\int_{\{|z|>1\}}
|\gamma(g,z)|\,\nu(dz).
\]
As in Theorem \ref{lem:super-pide}, the Cauchy--Schwarz inequality,
followed by Minkowski's inequality and the $L^2(\nu)$-Lipschitz
condition for $\gamma$, gives
\[
\int_{\{|z|>1\}}
|\gamma(g,z)|\,\nu(dz)
\leq
\sqrt{\lambda}
\left(
L_\gamma |g|+C_{\rm large}
\right),
\]
where $C_{\rm large}$ is defined in \eqref{C_large}. Therefore,
\begin{equation}\label{eq:J-large-vT-estimate}
J_{\rm large}v^T(g,t)
\leq
\left(
m q(t)+\frac{\alpha(t)}{2}
\right)
\sqrt{\lambda}
\left(
L_\gamma |g|+C_{\rm large}
\right).
\end{equation}
We next estimate the small-jump part. Since the terms
$\frac{\alpha(t)}{2}g$ and $Kq(t)$ are affine in $g$, they do not
contribute to the compensated jump difference. Hence
\[
J_{\rm small}v^T(g,t)
=
m q(t)
\int_{\{0<|z|\leq1\}}
\Bigl[
\Phi_a\bigl(g+\gamma(g,z)\bigr)-\Phi_a(g)
-\gamma(g,z)\Phi_a'(g)
\Bigr]\nu(dz).
\]
As in the proof of Theorem \ref{lem:super-pide}, the small-jump
asymptotic condition implies that there exists $R_0>0$ such that, for
$|g|\geq R_0$ and $0<|z|\leq1$, the segment
\[
\{g+\theta\gamma(g,z):0\leq\theta\leq1\}
\]
does not intersect $(-a,a)$. Since $\Phi_a''(x)=0$ for $|x|\geq a$,
Taylor's formula gives
\[
\Phi_a\bigl(g+\gamma(g,z)\bigr)-\Phi_a(g)
-\gamma(g,z)\Phi_a'(g)
=
0.
\]
Therefore,
\begin{equation}\label{eq:small-jump-zero}
J_{\rm small}v^T(g,t)=0,
\qquad
|g|\geq R_0,\quad t\in[0,T].
\end{equation}
We now verify the supersolution inequality on the tails. Recall that
$q'(t)=-\eta q(t)$ and $\eta=\beta+d$.

On the positive tail $g\geq a$, we have
\[
\Phi_a(g)=g-\frac{3a}{8},
\qquad
\Phi_a'(g)=1,
\qquad
\Phi_a''(g)=0.
\]
Therefore,
\[
\begin{aligned}
-\mathcal{P}v^T(g,t)
&=
\left[
m q(t)(\eta+r+\rho)
+\frac{1}{2}\bigl((r+\rho)\alpha(t)-\alpha'(t)\bigr)
\right]g
\\
&+
(\eta+r)q(t)
\left(
K-\frac{3a}{8}m
\right).
\end{aligned}
\]
Combining this with \eqref{eq:J-large-vT-estimate} and
\eqref{eq:small-jump-zero}, we obtain, for $g\geq R_0$,
\begin{equation}\label{Lnu}
\begin{aligned}
-\mathcal{L}^{\nu}v^T(g,t)
&\geq
\left[
m q(t)
\bigl(\eta+r+\rho-\sqrt{\lambda}L_\gamma\bigr)
+\frac{1}{2}
\bigl(
(r+\rho-\sqrt{\lambda}L_\gamma)\alpha(t)
-\alpha'(t)
\bigr)
\right]g
\\
&+
(\eta+r)q(t)
\left(
K-\frac{3a}{8}m
\right)
-
\left(
m q(t)+\frac{\alpha(t)}{2}
\right)
\sqrt{\lambda}\,C_{\rm large}.
\end{aligned}
\end{equation}
Taking into account \eqref{beta} and \eqref{d=eta-beta}, the coefficient
of $g$ in \eqref{Lnu} becomes
\[
m d q(t)
-\frac{\beta}{2}\alpha(t)
-\frac{1}{2}\alpha'(t).
\]
By the choice of $A$ above, $q(t)\geq\alpha(t)$ for all $t\in[0,T]$.
Since $\alpha'(t)\leq0$, it follows that
\[
m d q(t)
-\frac{\beta}{2}\alpha(t)
-\frac{1}{2}\alpha'(t)
\geq
\left(
m d-\frac{\beta}{2}
\right)q(t)>0,
\]
where the last inequality follows from the choice
$m>\frac{\beta}{2d}$.

We next consider the negative tail. For $g\leq-a$, we have
\[
\Phi_a(g)=|g|-\frac{3a}{8},
\qquad
\Phi_a'(g)=-1,
\qquad
\Phi_a''(g)=0.
\]
Using $q'(t)=-\eta q(t)$ and $g=-|g|$, we obtain
\[
\begin{aligned}
-\mathcal{P}v^T(g,t)
&=
\left[
m q(t)(\eta+r+\rho)
-\frac{1}{2}
\bigl((r+\rho)\alpha(t)-\alpha'(t)\bigr)
\right]|g|
\\
&+
(\eta+r)q(t)
\left(
K-\frac{3a}{8}m
\right).
\end{aligned}
\]
Combining this with \eqref{eq:J-large-vT-estimate} and
\eqref{eq:small-jump-zero}, we obtain, for $g\leq-R_0$,
\begin{equation}\label{Lnu2}
\begin{aligned}
-\mathcal{L}^{\nu}v^T(g,t)
&\geq{}
\left[
m q(t)
\bigl(\eta+r+\rho-\sqrt{\lambda}L_\gamma\bigr)
-\frac{1}{2}
\bigl(
(r+\rho+\sqrt{\lambda}L_\gamma)\alpha(t)
-\alpha'(t)
\bigr)
\right]|g|
\\
&+
(\eta+r)q(t)
\left(
K-\frac{3a}{8}m
\right)
-
\left(
m q(t)+\frac{\alpha(t)}{2}
\right)
\sqrt{\lambda}\,C_{\rm large}.
\end{aligned}
\end{equation}
Taking into account \eqref{beta} and \eqref{d=eta-beta}, the coefficient
of $|g|$ in \eqref{Lnu2} becomes
\[
m d q(t)
-
\frac{1}{2}
\left[
\bigl(r+\rho+\sqrt{\lambda}L_\gamma\bigr)\alpha(t)
-\alpha'(t)
\right].
\]
Set
\[
B_\alpha
:=
\frac{1}{2}
\max_{t\in[0,T]}
\left[
\bigl(r+\rho+\sqrt{\lambda}L_\gamma\bigr)\alpha(t)
-\alpha'(t)
\right]
<\infty.
\]
Then
\[
m d q(t)
-
\frac{1}{2}
\left[
\bigl(r+\rho+\sqrt{\lambda}L_\gamma\bigr)\alpha(t)
-\alpha'(t)
\right]
\geq
m d q(t)-B_\alpha.
\]
Since $q(t)=Ae^{-\eta t}$ is decreasing, we may further enlarge $A$ so that
\begin{equation}\label{cond-2: A}
A \geq \frac{2B_\alpha e^{\eta T}}{md}.
\end{equation}
It follows that
\[
m d q(t)-B_\alpha
\geq
\frac{md}{2}q(t)>0,
\qquad
t\in[0,T].
\]
We will require in addition that $K\geq\frac{3a}{8}m$. Since
$q(t)\geq\alpha(t)$, we also have
\[
m q(t)+\frac{\alpha(t)}{2}
\leq
\left(m+\frac{1}{2}\right)q(t).
\]
Thus, enlarging the tail radius if necessary, choose
\[
R
\geq
\max\left\{
R_0,\,
\frac{\left(m+\frac{1}{2}\right)\sqrt{\lambda}\,C_{\rm large}}
{md-\frac{\beta}{2}},\,
\frac{2\left(m+\frac{1}{2}\right)\sqrt{\lambda}\,C_{\rm large}}
{md}
\right\}.
\]
With these choices, the positive linear terms dominate the remaining
large-jump constants on both tails. Therefore,
\[
-\mathcal{L}^{\nu}v^T(g,t)\geq0,
\qquad
|g|\geq R,\quad t\in[0,T].
\]
It remains to control the operator on the bounded region $|g|\leq R$ and
to make the final choice of $K$. Write
\[
v^T(g,t)=w^T(g,t)+Kq(t),
\;\;\text{where}\;\;
w^T(g,t)
:=
m q(t)\Phi_a(g)+\frac{\alpha(t)}{2}g.
\]
Since the term $Kq(t)$ is independent of $g$, its spatial derivatives and
jump increments vanish. Using $q'(t)=-\eta q(t)$, we obtain
\[
-\mathcal{L}^{\nu}v^T(g,t)
=
-\mathcal{L}^{\nu}w^T(g,t)
+
(\eta+r)Kq(t).
\]
On the compact cylinder $[-R,R]\times[0,T]$, the local terms involving
$w^T$, $\partial_g w^T$, and $\partial_{gg}w^T$ are uniformly bounded.
Moreover, as in the proof of Theorem \ref{lem:super-pide}, the
admissibility assumptions on $\gamma$, together with the boundedness of
$\Phi_a'$ and $\Phi_a''$, imply that both $J_{\rm small}w^T$ and
$J_{\rm large}w^T$ are uniformly bounded on
$[-R,R]\times[0,T]$. Hence there exists a constant $M_R<\infty$ such that
\[
-\mathcal{L}^{\nu}w^T(g,t)\geq -M_R,
\qquad
|g|\leq R,\quad t\in[0,T].
\]
Since $q$ is decreasing, $q(t)\geq q(T)=Ae^{-\eta T}$ for all
$t\in[0,T]$. Therefore, choosing
\[
K
\geq
\frac{M_R e^{\eta T}}{(\eta+r)A}
\]
gives
\[
(\eta+r)Kq(t)\geq M_R,
\qquad
t\in[0,T],
\]
and hence
\[
-\mathcal{L}^{\nu}v^T(g,t)\geq0,
\qquad
|g|\leq R,\quad t\in[0,T].
\]
Together with the tail estimates established above, this yields
\[
-\mathcal{L}^{\nu}v^T(g,t)\geq0,
\qquad
(g,t)\in\mathbb{R}\times[0,T).
\]
We may therefore choose $K$ sufficiently large so that
\[
K
\geq
\max\left\{
\frac{3am}{8},
\frac{M_R e^{\eta T}}{(\eta+r)A},
K_{\rm nonneg}
\right\}.
\]
For this choice of $K$, the function $v^T$ is nonnegative and satisfies
$-\mathcal{L}^{\nu}v^T(g,t)\geq0$ throughout
$\mathbb{R}\times[0,T)$. Together with the obstacle inequality
established above and the terminal condition $v^T(g,T)\geq0$, and since
$v^T\in C^{2,1}(\mathbb{R}\times[0,T])$ with the non-local integral well
defined, the classical inequalities imply the corresponding viscosity
supersolution inequalities. Hence $v^T$ is a nonnegative global
viscosity supersolution of \eqref{eq:bubble-pide}.
\end{proof}

%%%%%%%%%%%%%%%%%%%%%%%%%%%%%%%%%%%%%%%%%%%%%%%%%%%%%%%%%%%%%%%%%%%%%%%%%%%%%%
\begin{corollary}\label{Cor: exp-growth}
Unlike the supersolution constructed in Theorem \ref{lem:super-pide}, the specific supersolution $v^T$ constructed here depends on the time horizon $T$. Moreover, if $C_T$ is any valid constant satisfying the linear-growth bound
\[
    v^T(g,t)\leq C_T(1+|g|),
    \qquad
    (g,t)\in\mathbb{R}\times[0,T],
\]
then $C_T$ necessarily satisfies
\[
    C_T > \frac{1}{2} \left(\max_{t\in[0,T]} e^{\beta t}\alpha(t)+\alpha(0)\right).
\]
\end{corollary}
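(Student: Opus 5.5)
The plan is to bound $C_T$ from below by evaluating the linear-growth inequality along rays $g\to\pm\infty$. This extracts the asymptotic slopes of $v^T$ on each tail, and those slopes are given by the explicit construction \eqref{eq:candidate-profile}. Since $\Phi_a(g)=|g|-\frac{3a}{8}$ for $|g|\ge a$, we have, for $g\ge a$,
\[
v^T(g,t)=\Bigl(mq(t)+\tfrac{\alpha(t)}{2}\Bigr)g+q(t)\Bigl(K-\tfrac{3am}{8}\Bigr).
\]
Dividing $v^T(g,t)\le C_T(1+|g|)$ by $1+g$ and letting $g\to+\infty$ gives
\[
C_T\ge mq(t)+\tfrac{\alpha(t)}{2}\qquad\text{for all } t\in[0,T].
\]
For the strict inequality, note that $K\ge\frac{3am}{8}$ makes the constant term nonnegative. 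The quantity $(\text{slope})\,g+\text{const}$ divided by $1+g$ then tends to the slope, so $C_T\ge$ slope. Strictness will come from the strict inequalities in the slope bounds below.

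Next I use the choice of $A$ in \eqref{cond-1: A}, namely $A\ge\max_t e^{\eta t}\alpha(t)$, together with $\eta>\beta$ and $m>\frac12$. At $t=0$ the slope is
\[
mA+\frac{\alpha(0)}{2}>\frac{A}{2}+\frac{\alpha(0)}{2}\ge\frac12\Bigl(\max_{t}e^{\eta t}\alpha(t)+\alpha(0)\Bigr).
\]
Since $\alpha\ge0$ and $\eta>\beta$, we have $e^{\eta t}\alpha(t)\ge e^{\beta t}\alpha(t)$ for every $t$. This yields
\[
C_T\ge mA+\tfrac{\alpha(0)}{2}>\frac12\Bigl(\max_{t\in[0,T]}e^{\beta t}\alpha(t)+\alpha(0)\Bigr).
\]
The strict inequality is inherited from $m>\frac12$, using $A>0$. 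The negative tail gives the slope $mq(t)-\alpha(t)/2$, which is weaker, so it is not needed.

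The main obstacle is interpretive rather than computational. The statement is about the specific $v^T$, so every admissible choice of $A,\eta,m,K$ in the proof of Theorem~\ref{lem:finite-horizon-existence} must be covered. The argument above uses only the constraints $m>\frac12$, $\eta>\beta$, \eqref{cond-1: A}, and $K\ge\frac{3am}{8}$, all of which hold for every valid choice. I will state this explicitly, so that the bound holds uniformly over all constructions of the theorem. I will also remark that $\max_t e^{\beta t}\alpha(t)$ can grow exponentially in $T$, for instance when $\alpha$ stays positive on a long interval. This is the inflation of $C_T$ described in the introduction.
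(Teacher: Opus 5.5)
Your proposal is correct and follows the paper's proof essentially step for step. Both arguments extract the asymptotic slope $mA+\frac{\alpha(0)}{2}$ at $t=0$ from the growth bound, use $m>\frac12$ (with $A>0$) for strictness, and chain $A\ge\max_t e^{\eta t}\alpha(t)\ge\max_t e^{\beta t}\alpha(t)$; your appeal to $K\ge\frac{3am}{8}$ is unnecessary, since the sign of the constant term does not affect the limit of $v^T(g,t)/(1+g)$.
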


\begin{proof}
Let $C_T$ be any valid bounding constant such that $v^T(g,t) \leq C_T(1+|g|)$ holds globally on $\mathbb{R}\times[0,T]$. We consider the asymptotic slope at the initial time $t=0$. For any $g > 0$, dividing the bound by $g$ yields
\[
    \frac{v^T(g,0)}{g} \leq C_T\left(\frac{1}{g} + 1\right).
\]
Taking the limit as $g \to +\infty$, the term $\frac{1}{g}$ vanishes. Our explicit construction \eqref{eq:candidate-profile} implies $\Phi_a(g) = g - \frac{3a}{8}$ for all $g \ge a$. Thus, the asymptotic slope evaluates exactly to
\[
    \lim_{g \to +\infty} \frac{v^T(g,0)}{g} = m q(0) + \frac{\alpha(0)}{2} = mA + \frac{\alpha(0)}{2}.
\]
Therefore, any valid linear-growth constant must satisfy 
$$
C_T \geq mA + \frac{\alpha(0)}{2}.
$$
Recall from our parameter selection in proof of  Theorem \ref{lem:finite-horizon-existence} that the slope parameter $m$ must satisfy $m > \max\left\{ \frac{1}{2}, \frac{\beta}{2d} \right\}\ge\frac{1}{2} $. Thus,
\[
    C_T \ge mA +\frac{\alpha(0)}{2} > \frac{A+\alpha(0)}{2}.
\]
In the construction above, according to \eqref{cond-1: A} and \eqref{cond-2: A}, the constant $A$ is chosen sufficiently large so that
\[
A
\geq
\max\left\{
\max_{t\in[0,T]}e^{\eta t}\alpha(t),
\frac{2B_\alpha e^{\eta T}}{md}
\right\}.
\]
In particular,
\[
A
\geq
\max_{t\in[0,T]}
e^{\eta t}\alpha(t)
\geq
\max_{t\in[0,T]}
e^{\beta t}\alpha(t),
\]
where the last inequality follows from $\eta>\beta$ and
$\alpha(t)\geq0$.
Combining these bounds proves that the linear-growth constant necessarily satisfies
\[
    C_T > \frac{A+\alpha(0)}{2} \geq \frac{1}{2} \left(\max_{t\in[0,T]} e^{\beta t}\alpha(t)+\alpha(0)\right).
\]
\end{proof}

%%%%%%%%%%%%%%%%%%%%%%%%%%%%%%%%%%%%%%%%%%%%%%%%%%%%%%%%%%%%%%%
%%%%%%%%%%%%%%%%%%%%%%%%%%%%%%%%%%%%%%%%%%%%%%%%%%%%%%%%%%%%%%%%%%

As shown in Theorem \ref{lem:finite-horizon-existence} and Corollary \ref{Cor: exp-growth}, the constructed  supersolution in this regime requires a linear slope which is bounded from below  by   $\frac{1}{2} \left(\max_{t\in[0,T]} e^{\beta t}\alpha(t)+\alpha(0)\right)$
 as the horizon $T$ increases. We now prove that  if we impose a uniform upper bound $C_{\max}$ on the linear growth no supersolution can exist beyond a critical, finite time threshold $T_{\mathrm{crit}}$.

\begin{theorem}\label{prop:long-horizon}
Let $\nu(dz)$ be a L\'evy measure on $\mathbb{R}_0$ with finite
large-jump intensity
$\lambda:=\nu(\{|z|>1\})<\infty.$
Let for given constant $M>0$ holds
\[
r+\rho<\sqrt{\lambda}\,M,
\;\;
\text{and set}
\;\;
\beta:=\sqrt{\lambda}\,M-(r+\rho)>0.
\]
Fix a uniform linear-growth ceiling constant $C_{\max}>0$. Suppose that
there exist $\tau\in(0,T)$ and $\alpha_*>0$ such that
$\alpha(t)\geq\alpha_*,\;\; t\in[0,\tau].$
Then whenever $\tau$ satisfies
\[
\tau>T_{\mathrm{crit}}
:=
\frac{1}{\beta}
\log\left(\frac{C_{\max}}{\alpha_*}\right),
\]
there exists an admissible jump amplitude $\gamma_*$, with
$L^2(\nu)$-Lipschitz constant $M$, for which there is no
nonnegative lower semicontinuous function
$v^T:\mathbb{R}\times[0,T]\to[0,\infty)$
satisfying the uniform linear-growth bound
\begin{equation}\label{eq:linear-growth2}
v^T(g,t)
\leq
C_{\max}(1+|g|),
\qquad
(g,t)\in\mathbb{R}\times[0,T],
\end{equation}
that simultaneously satisfies the obstacle inequality
\begin{equation}\label{Obstaclecond}
v^T(g,t)-v^T(-g,t)
\geq
\alpha(t)g-c,
\qquad
(g,t)\in\mathbb{R}\times(0,T),
\end{equation}
and the viscosity supersolution inequality for the
integro-differential operator on
$\mathbb{R}\times(0,T)$.
\end{theorem}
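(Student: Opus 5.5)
The plan is to argue by contradiction. I will choose a linear jump amplitude $\gamma_*$ for which the large-jump part of $\mathcal{A}$ acts on linear functions exactly like a drift of rate $\sqrt{\lambda}M$. Then I compare any candidate $v^T$ with an explicit linear-in-$g$ subsolution whose slope grows like $e^{\beta(\tau-t)}$ backwards in time. This forces the asymptotic slope of $v^T(\cdot,0)$ to be at least $\alpha_*e^{\beta\tau}$, which exceeds $C_{\max}$ once $\tau>T_{\mathrm{crit}}$. This is the "asymptotic slope envelope" mechanism announced in the introduction, and it complements Corollary~\ref{Cor: exp-growth}.

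\emph{Step 1 (choice of $\gamma_*$).} Set $\gamma_*(g,z):=\frac{M}{\sqrt{\lambda}}\,g\,\mathbf{1}_{\{|z|>1\}}$. Measurability and continuity are clear. Since $\gamma_*(0,z)=0$ and
\[
\int_{\mathbb{R}_0}|\gamma_*(x,z)-\gamma_*(y,z)|^2\nu(dz)=\frac{M^2}{\lambda}\,\lambda\,|x-y|^2=M^2|x-y|^2 ,
\]
the $L^2(\nu)$-Lipschitz constant is exactly $M$. The small-jump conditions hold trivially because $\gamma_*\equiv0$ on $\{0<|z|\le1\}$. For $\varphi(g,t)=k(t)g$ with $k\in C^1$, I compute
\[
\mathcal{A}\varphi=-\rho k g+\lambda\frac{M}{\sqrt\lambda}kg=(\sqrt{\lambda}M-\rho)kg,
\qquad\text{so}\qquad
\mathcal{L}^\nu\varphi=(k'+\beta k)\,g .
\]
Hence $k(t)=\alpha_*e^{\beta(\tau-t)}$ makes $\varphi$ an exact classical solution of $\mathcal{L}^\nu\varphi=0$. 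Constants satisfy $\mathcal{L}^\nu(-c e^{-r(\tau-t)})=0$. Therefore
\[
w(g,t):=\alpha_*e^{\beta(\tau-t)}g-c\,e^{-r(\tau-t)}
\]
is a classical, hence viscosity, solution of the linear PIDE on $\mathbb{R}\times[0,\tau)$ with linear growth.

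\emph{Step 2 (terminal ordering).} Suppose $v^T$ is as in the statement. By \eqref{Obstaclecond} and $v^T\ge0$, for $g\ge0$ and $t\in(0,T)$ we have $v^T(g,t)\ge\alpha(t)g-c+v^T(-g,t)\ge\alpha(t)g-c$. For $g<0$ the same lower bound holds trivially since $v^T\ge0>\alpha g-c$. By lower semicontinuity this extends to $t=\tau$. Because $\alpha(\tau)\ge\alpha_*$, we get $v^T(\cdot,\tau)\ge \alpha_*g-c=w(\cdot,\tau)$ on $\mathbb{R}$.

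\emph{Step 3 (comparison and contradiction).} The viscosity supersolution property of the obstacle problem in particular gives $-\mathcal{L}^{\nu,\delta}[\varphi,v^T]\ge0$ at test points. So $v^T$ is a viscosity supersolution of the linear PIDE $-\mathcal{L}^\nu u=0$ on $\mathbb{R}\times(0,\tau)$, and $w$ is a subsolution. I will invoke the comparison principle of Barles--Imbert \cite{BI08} for linear-growth viscosity solutions of non-local equations on $\mathbb{R}\times[0,\tau]$ to conclude $v^T\ge w$ there. At $t=0$ this gives, for $g>0$,
\[
C_{\max}(1+g)\ge v^T(g,0)\ge \alpha_*e^{\beta\tau}g-c e^{-r\tau}.
\]
Dividing by $g$ and letting $g\to\infty$ yields $C_{\max}\ge\alpha_*e^{\beta\tau}$, i.e.\ $\tau\le T_{\mathrm{crit}}$, a contradiction.

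The main obstacle is Step 3: justifying comparison in the linear-growth class with the unbounded, state-dependent large jumps $g\mapsto(1+M/\sqrt\lambda)g$. One must also handle the fact that $v^T$ is only lower semicontinuous and the terminal inequality is obtained by a limit. I would first check that Assumption (NLT) of \cite{BI08} holds for $\gamma_*$; it is linear with finite large-jump mass, so this should be routine. If a clean citation is unavailable, I would fall back on a direct argument: perturb $w$ to $w_\varepsilon:=w-\varepsilon e^{\kappa(\tau-t)}(1+g^2)^{1/2}$ with $\kappa$ large enough that $w_\varepsilon$ is a strict subsolution. Then $v^T-w_\varepsilon\to+\infty$ as $|g|\to\infty$, so a negative minimum of $v^T-w_\varepsilon$ would be attained. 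Testing $v^T$ there with $w_\varepsilon$ plus a constant, and using the split operator $\mathcal{L}^{\nu,\delta}$ together with the monotonicity of the large-jump integral in $v^T$, gives a contradiction. Letting $\varepsilon\to0$ then recovers $v^T\ge w$.
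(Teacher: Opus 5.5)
Your route is genuinely different from the paper's, and in outline it works. The paper never builds a global barrier. It sets $W^T(x,t)=v^T(e^x,t)/e^x$ and introduces the slope envelope $L^T(t)=\liminf_{(x,s)\to(+\infty,t)}W^T(x,s)$. A penalization at spatial infinity then shows that $L^T$ is a viscosity supersolution of $-L'-\beta L\ge0$; there the large jump $g\mapsto\xi g$, $\xi=1+M/\sqrt{\lambda}$, becomes the shift $x\mapsto x+\log\xi$. ODE comparison gives $L^T(t)\ge e^{\beta(\tau-t)}\alpha_*$, which contradicts $L^T\le C_{\max}$ as $t\downarrow0$. Because the paper only tests $v^T$ at points $g_n\to+\infty$, it needs no coercivity and no PIDE comparison principle. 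You instead use the linearity of $\gamma_*$ to write down an exact solution $w$ and aim for the stronger pointwise bound $v^T\ge w$, after which the contradiction takes one line. This is shorter, but everything rests on comparison in the linear-growth class with the unbounded jump map $g\mapsto\xi g$. That is not available off the shelf from Barles--Imbert, whose comparison results are formulated for bounded sub- and supersolutions. So your fallback is the actual proof.

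As written, the fallback fails at the coercivity step. With $w_\varepsilon=w-\varepsilon e^{\kappa(\tau-t)}(1+g^2)^{1/2}$ you cannot claim $v^T-w_\varepsilon\to+\infty$. You only know $0\le v^T\le C_{\max}(1+|g|)$, while $w$ has slope $\alpha_*e^{\beta(\tau-t)}$, which exceeds $C_{\max}$ near $t=0$ exactly in the regime $\tau>T_{\mathrm{crit}}$ you want to rule out. For small $\varepsilon$ the difference then tends to $-\infty$ as $g\to+\infty$, and no minimizer exists.

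The repair is a superlinear penalization, $w_{\varepsilon,\eta}:=w-\varepsilon e^{\kappa(\tau-t)}(1+g^2)-\eta/(t-t_1)$:
\begin{itemize}
\item Under $\gamma_*$ one has $\mathcal{A}(1+g^2)=(\lambda(\xi^2-1)-2\rho)g^2+\sigma^2$. Taking $\kappa\ge\max\{\lambda(\xi^2-1),\sigma^2\}$ makes the quadratic term a strict subsolution correction. The term $-\eta/(t-t_1)$ is one as well, since $-\mathcal{L}^\nu$ applied to it equals $-\eta/(t-t_1)^2-r\eta/(t-t_1)<0$.
\item Coercivity in $g$ now follows from $v^T\ge0$ alone. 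The $\eta$-term keeps the minimum off $t=t_1$, so a negative minimum over $\mathbb{R}\times(t_1,\tau]$ sits at some $(g_0,t_0)$ with $t_0\in(t_1,\tau)$. This matters because no viscosity inequality is assumed at $t=0$, and your comparison on $[0,\tau]$ ignores that.
\item Localize the test function in time by a cutoff, as the paper does. Since the minimum is global in $g$, you get $v^T(\xi g_0,t_0)-v^T(g_0,t_0)\ge w_{\varepsilon,\eta}(\xi g_0,t_0)-w_{\varepsilon,\eta}(g_0,t_0)$. Inserting this into $-\mathcal{L}^{\nu,\delta}$ gives a strictly negative value, which is the contradiction.
\end{itemize}
Letting $\eta,\varepsilon\to0$ and $t_1\downarrow0$ yields $v^T\ge w$ on $\mathbb{R}\times(0,\tau]$. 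Then apply the growth bound at $t\in(0,\tau)$ and let $t\downarrow0$, instead of evaluating at $t=0$. Lower semicontinuity does not carry lower bounds down to $t=0$, and it is not needed at $t=\tau$, which is interior. With these repairs your argument is a complete alternative proof.
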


\begin{proof}
Since $r+\rho<\sqrt{\lambda}\,M$
and \(r>0\), we necessarily have \(\lambda>0\). Define
\begin{equation}\label{gamma_*(g,z)}
\gamma_*(g,z)
:=
\begin{cases}
\displaystyle
\frac{M}{\sqrt{\lambda}}\,g,
& |z|>1,\\[1.5ex]
0,
& 0<|z|\leq1.
\end{cases}
\end{equation}
For every \(g\in\mathbb{R}\), the map
\(z\mapsto\gamma_*(g,z)\) is Borel measurable on \(\mathbb{R}_0\), and,
for every \(z\in\mathbb{R}_0\), the map
\(g\mapsto\gamma_*(g,z)\) is continuous.

For every \(g,h\in\mathbb{R}\), we have
\[
\begin{aligned}
\int_{\mathbb{R}_0}
|\gamma_*(g,z)-\gamma_*(h,z)|^2\,\nu(dz)
=
\frac{M^2}{\lambda}\int_{\{|z|>1\}}
|g-h|^2\,\nu(dz)=
M^2|g-h|^2.
\end{aligned}
\]
Moreover,
\[
\int_{\mathbb{R}_0}
|\gamma_*(0,z)|^2\,\nu(dz)=0<\infty.
\]
Finally, since \(\gamma_*(g,z)=0\) for \(0<|z|\leq1\), we get
\[
\sup_{0<|z|\leq1}
\frac{|\gamma_*(g,z)|}{|z|}=0\le K_1(1+|g|),
\;\;\text{and}\;\;
\limsup_{|g|\to\infty}
\sup_{0<|z|\leq1}
\frac{|\gamma_*(g,z)|}{|g|}=0
\leq
1-\epsilon,
\]
for any \(\epsilon\in(0,1)\).
Thus, \(\gamma_*\) is admissible with
$L^2(\nu)$-Lipschitz constant $M$.
We argue by contradiction. Suppose that there exists a nonnegative viscosity
supersolution \(v^T\) satisfying the linear-growth bound
\eqref{eq:linear-growth2}. For every \(g>0\) and \(t\in(0,T)\), the
obstacle condition \eqref{Obstaclecond} and the nonnegativity of \(v^T\) imply
\[
v^T(g,t)\geq\alpha(t)g-c,
\]
and hence
\begin{equation}\label{eq:normalized-obstacle-bound}
\frac{v^T(g,t)}{g}
\geq
\alpha(t)-\frac{c}{g},
\qquad
g>0,\quad t\in(0,T).
\end{equation}
On the other hand, \eqref{eq:linear-growth2} gives
\begin{equation}\label{eq:normalized-growth-bound}
\frac{v^T(g,t)}{g}
\leq
C_{\max}\left(1+\frac{1}{g}\right),
\qquad
g>0,\quad t\in[0,T].
\end{equation}
Introduce the logarithmic variable \(g=e^x\) and define
\begin{equation}\label{defW^T}
W^T(x,t)
:=
\frac{v^T(e^x,t)}{e^x},
\qquad
(x,t)\in\mathbb{R}\times[0,T].
\end{equation}
Since \(v^T\) is lower semicontinuous and nonnegative, \(W^T\) is lower
semicontinuous on \(\mathbb{R}\times[0,T]\). By
\eqref{eq:normalized-obstacle-bound} and
\eqref{eq:normalized-growth-bound},
\[
\alpha(t)-ce^{-x}
\leq
W^T(x,t)
\leq
C_{\max}(1+e^{-x}),
\qquad
x\in\mathbb{R},\quad t\in(0,T).
\]
Define the asymptotic slope envelope by
\begin{equation}\label{envelope}
L^T(t)
:=
\liminf_{(x,s)\to(+\infty,t)}
W^T(x,s),
\qquad
t\in(0,T).
\end{equation}
By continuity of \(\alpha\), taking the joint limit inferior as
\((x,s)\to(+\infty,t)\) in the bounds obtained from
\eqref{eq:normalized-obstacle-bound} and
\eqref{eq:normalized-growth-bound} gives
\begin{equation}\label{eq:envelope-bounds}
\alpha(t)
\leq
L^T(t)
\leq
C_{\max},
\qquad
t\in(0,T).
\end{equation}
The bounds in \eqref{eq:envelope-bounds} alone do not yet yield a
contradiction. We therefore study the time evolution of the asymptotic
slope envelope \(L^T\). We will show that \(L^T\) satisfies, in the
viscosity sense,
\[
-\partial_t L^T -\beta L^T\geq 0,
\qquad t\in(0,T).
\]
This inequality will yield an exponential lower bound for \(L^T\), which will lead to the desired contradiction.

Let \(t_0\in(0,T)\),  and suppose that
\(\zeta\in C^1((0,T))\) touches \(L^T\) from below at \(t_0\). Thus,
\[
L^T(t_0)=\zeta(t_0),
\]
and there exists \(\delta>0\) such that
$I_\delta:=[t_0-\delta,t_0+\delta]\subset(0,T)$
and
$
L^T(t)\geq\zeta(t),
\;\; t\in I_\delta.
$
For some \(\kappa>0\), define
\[
\widetilde{\zeta}(t)
:=
\zeta(t)-\kappa(t-t_0)^2.
\]
Then
\[
\widetilde{\zeta}(t_0)=\zeta(t_0),
\qquad
\widetilde{\zeta}'(t_0)=\zeta'(t_0),
\]
and, since \(L^T(t)-\zeta(t)\geq0\) on \(I_\delta\),
\[
L^T(t)-\widetilde{\zeta}(t)
=
L^T(t)-\zeta(t)+\kappa(t-t_0)^2
\geq
\kappa(t-t_0)^2,
\qquad t\in I_\delta.
\]
Replacing \(\zeta\) by \(\widetilde{\zeta}\), we may therefore assume that
\begin{equation}\label{eq:strict-local-contact}
L^T(t)-\zeta(t)
\geq
\kappa(t-t_0)^2,
\qquad
t\in I_\delta.
\end{equation}
By the definition \eqref{envelope}, there exists a sequence
\((y_n,s_n)\) such that
\[
y_n\to+\infty,
\qquad
s_n\to t_0,
\qquad
W^T(y_n,s_n)\to L^T(t_0).
\]
Passing to a subsequence if necessary, and relabeling, we may assume that
\(y_n\geq 2n\). Set
\[
A_n:=y_n-n.
\]
Then
\[
A_n\to+\infty,
\qquad
y_n-A_n=n.
\]
For all sufficiently large \(n\), we have \(s_n\in I_\delta\). On the
closed cylinder
\[
\mathcal Q_n:=[A_n,\infty)\times I_\delta,
\]
define
\[
G_n(x,t)
:=
W^T(x,t)-\zeta(t)
+(t-t_0)^2
+\frac{(x-y_n)^2}{n}.
\]
Since \(W^T\) is lower semicontinuous, \(G_n\) is lower semicontinuous
on \(\mathcal Q_n\). Moreover, by
\eqref{eq:normalized-obstacle-bound},
\[
W^T(x,t)\geq \alpha(t)-ce^{-x}\geq -ce^{-A_n},
\qquad
(x,t)\in\mathcal Q_n.
\]
Since \(\zeta\) is bounded on the compact interval \(I_\delta\), it follows
that
\[
G_n(x,t)\longrightarrow+\infty
\quad\text{as }x\to+\infty,
\]
uniformly in \(t\in I_\delta\). Hence \(G_n\) attains its minimum on
\(\mathcal Q_n\). Let \((x_n,t_n)\) be a minimizer.

At the comparison point \((y_n,s_n)\),
\[
G_n(y_n,s_n)
=
W^T(y_n,s_n)-\zeta(s_n)+(s_n-t_0)^2.
\]
Since
\[
W^T(y_n,s_n)\to L^T(t_0),
\qquad
\zeta(s_n)\to\zeta(t_0)=L^T(t_0),
\qquad
s_n\to t_0,
\]
we obtain
\[
G_n(y_n,s_n)\to0.
\]
Since \((x_n,t_n)\) minimizes \(G_n\) on \(\mathcal Q_n\), we have
\[
G_n(x_n,t_n)\leq G_n(y_n,s_n).
\]
Therefore,
\begin{equation}\label{eq:G-upper}
\limsup_{n\to\infty}G_n(x_n,t_n)\leq0.
\end{equation}
We next show that \(t_n\) lies in the interior of \(I_\delta\) for all
sufficiently large \(n\). Fix \(t=t_0\pm\delta\). By the definition
\eqref{envelope},
\[
\liminf_{x\to+\infty}W^T(x,t)
\geq
L^T(t).
\]
Moreover, since \(A_n\to+\infty\),
\[
\lim_{n\to\infty}
\inf_{x\geq A_n}W^T(x,t)
=
\liminf_{x\to+\infty}W^T(x,t).
\]
Consequently,
\[
\lim_{n\to\infty}
\inf_{x\geq A_n}W^T(x,t)
\geq
L^T(t).
\]
Moreover, the spatial penalty is nonnegative. Hence
\[
\begin{aligned}
\liminf_{n\to\infty}
\inf_{x\geq A_n}
G_n(x,t_0\pm\delta)
\geq
L^T(t_0\pm\delta)
-\zeta(t_0\pm\delta)
+\delta^2
\geq
(\kappa+1)\delta^2
>0,
\end{aligned}
\]
where the last inequality follows from
\eqref{eq:strict-local-contact}. On the other hand, by \eqref{eq:G-upper},
\(\limsup_{n\to\infty}G_n(x_n,t_n)\leq0\).
Since the values of \(G_n\) on either time boundary are bounded away
from zero for all sufficiently large \(n\), the minimizer cannot lie on
either time boundary. Therefore,
\[
t_n\in(t_0-\delta,t_0+\delta)
\]
for all sufficiently large \(n\).

We next identify the asymptotic behavior of the minimizing sequence.
Since $x_n\geq A_n$ and $A_n\to+\infty$, we have
\(x_n\to+\infty\).

Let $(t_{n_j})$ be any convergent subsequence of $(t_n)$, with
\(t_{n_j}\to t_*\in I_\delta\). Along the same subsequence of indices, we also consider
$(x_{n_j})$, $(y_{n_j})$, and $(G_{n_j})$. Since
$x_{n_j}\to+\infty$, the definition of the joint limit inferior gives
\[
    \liminf_{j\to\infty}
    W^T(x_{n_j},t_{n_j})
    \geq L^T(t_*).
\]
Therefore,
\begin{align*}
    \liminf_{j\to\infty}
    G_{n_j}(x_{n_j},t_{n_j})
    &\geq
    L^T(t_*)-\zeta(t_*)
    +(t_*-t_0)^2
    +
    \liminf_{j\to\infty}
    \frac{(x_{n_j}-y_{n_j})^2}{n_j} \\
    &\geq
    (\kappa+1)(t_*-t_0)^2
    +
    \liminf_{j\to\infty}
    \frac{(x_{n_j}-y_{n_j})^2}{n_j},
\end{align*}
where the last inequality follows from
\eqref{eq:strict-local-contact}.

On the other hand, \eqref{eq:G-upper} implies
\(\limsup_{j\to\infty}
    G_{n_j}(x_{n_j},t_{n_j})
    \leq 0\). Hence
\[
0
\geq
\limsup_{j\to\infty}
G_{n_j}(x_{n_j},t_{n_j}) \geq
\liminf_{j\to\infty}
G_{n_j}(x_{n_j},t_{n_j}) \geq
(\kappa+1)(t_*-t_0)^2
+
\liminf_{j\to\infty}
\frac{(x_{n_j}-y_{n_j})^2}{n_j}
\geq 0.
\]

Therefore, \(t^*=t_0\). Since \(t_n\in I_\delta\) and every convergent
subsequence of \((t_n)\) has limit \(t_0\), we obtain \(t_n\to t_0.\)

Since \(x_n\geq A_n\to+\infty\), we also have \(x_n\to+\infty\). Hence,
by the definition of the joint limit inferior,
\[
\liminf_{n\to\infty}W^T(x_n,t_n)\geq L^T(t_0).
\]
Moreover,
\[
\zeta(t_n)\to\zeta(t_0)=L^T(t_0),
\]
and therefore
\[
\liminf_{n\to\infty}
\bigl(W^T(x_n,t_n)-\zeta(t_n)\bigr)\geq0.
\]
Recalling that
\[
G_n(x_n,t_n)
=
W^T(x_n,t_n)-\zeta(t_n)
+(t_n-t_0)^2
+\frac{(x_n-y_n)^2}{n},
\]
and using \eqref{eq:G-upper}, we obtain
\[
\begin{aligned}
0\leq
\limsup_{n\to\infty}\frac{(x_n-y_n)^2}{n}
\leq
\limsup_{n\to\infty}G_n(x_n,t_n)
-
\liminf_{n\to\infty}
\bigl(W^T(x_n,t_n)-\zeta(t_n)\bigr)\leq 0.
\end{aligned}
\]
Hence,
\begin{equation}\label{eq:minimum-sequence}
t_n\to t_0,
\qquad
\frac{(x_n-y_n)^2}{n}\to0.
\end{equation}
In particular,
\[
|x_n-y_n|=o(\sqrt n).
\]
Since $x_n\to+\infty$ and $t_n\to t_0$, the definition of the joint
limit inferior yields
\begin{equation}\label{ineq1}
    \liminf_{n\to\infty}
    W^T(x_n,t_n)
    \geq
    L^T(t_0).
\end{equation}
On the other hand, by the minimizing property,
\[
    W^T(x_n,t_n)-\zeta(t_n)
    \leq
    G_n(x_n,t_n)
    \leq
    G_n(y_n,s_n)\to0.
\]
Since
\[
    \zeta(t_n)\to\zeta(t_0)=L^T(t_0),
\]
combining this with the lower bound \eqref{ineq1} yields
\begin{equation}\label{eq:W-contact-sequence}
    W^T(x_n,t_n)\to L^T(t_0).
\end{equation}
Finally, \eqref{eq:minimum-sequence} and the relation
$y_n-A_n=n$ imply
\[
x_n-A_n=
    (y_n-A_n)+(x_n-y_n) 
    \geq
    n-|x_n-y_n|=
    n-o(\sqrt n)>0
\]
for all sufficiently large $n$. Hence $(x_n,t_n)$ lies in the interior
of $\mathcal Q_n$ for all sufficiently large $n$.

Rearranging $G_n(x,t) \geq G_n(x_n, t_n)$  for all $(x,t) \in \mathcal{Q}_n$, we obtain
\begin{align*}
      W^T(x,t)& \geq W^T(x_n, t_n) + \zeta(t) - \zeta(t_n) - \bigl((t-t_0)^2 - (t_n-t_0)^2\bigr) - \frac{(x-y_n)^2 - (x_n-y_n)^2}{n} \\&=: \mu_n(x,t).
\end{align*}
Therefore, the test function $\mu_n\in C^{2,1}$ touches $W^T$ from
below at $(x_n,t_n)$ on $\mathcal Q_n$, with
\[
\mu_n(x_n,t_n)=W^T(x_n,t_n).
\]
Its partial derivatives evaluate  to
\begin{equation}\label{eq:test-derivatives}
   \partial_t \mu_n(x_n,t_n) = \zeta'(t_n) - 2(t_n - t_0), \;\; \partial_x\mu_n(x_n,t_n) = -\frac{2(x_n-y_n)}{n}, \;\; \partial_{xx}\mu_n(x_n,t_n) = -\frac{2}{n},
\end{equation}
giving the deterministic asymptotic limits
\begin{equation}\label{eq:derivative-limits}
    \partial_t \mu_n(x_n,t_n) \to \zeta'(t_0), \;\; \partial_x\mu_n(x_n,t_n)\to0, \;\; \partial_{xx}\mu_n(x_n,t_n)\to0.
\end{equation}
In terms of the original variable $g=e^x$, define the local spacetime
test function
\[
    \varphi_n(g,t):=g\,\mu_n(\log g,t),
    \qquad g>0.
\]
It follows from \eqref{defW^T} that $\varphi_n$ touches $v^T$ from
below at
\[
    (g_n,t_n):=(e^{x_n},t_n)
\]
on $(e^{A_n},\infty)\times I_\delta$, with
\[
    \varphi_n(g_n,t_n)=v^T(g_n,t_n).
\]
To apply the viscosity supersolution definition, we globalize this test
function. Since $(g_n,t_n)$ lies in the interior of
\((e^{A_n},\infty)\times I_\delta\) 
for all sufficiently large $n$, choose cutoff functions
\[
\chi_n\in C_c^\infty
\bigl((e^{A_n},\infty)\times(t_0-\delta,t_0+\delta)\bigr)
\]
such that $0\leq\chi_n\leq1$ and $\chi_n\equiv1$ in a neighborhood of
$(g_n,t_n)$. Fix $M_n>0$ and define
\[
\widehat{\varphi}_n(g,t)
:=
\chi_n(g,t)\varphi_n(g,t)
-
\bigl(1-\chi_n(g,t)\bigr)M_n,
\]
where $\chi_n\varphi_n$ is extended by zero outside the support of
$\chi_n$. Then $\widehat{\varphi}_n\in C^{2,1}(\mathbb R\times[0,T))$.
In particular,
\[
\widehat{\varphi}_n=\varphi_n
\quad\text{in a neighborhood of }(g_n,t_n),
\]
while
\[
\widehat{\varphi}_n=-M_n
\quad\text{outside the support of }\chi_n.
\]
Since $\varphi_n\leq v^T$ on the support of $\chi_n$ and
$v^T\geq0$, we have
\[
\widehat{\varphi}_n
\leq
v^T
\qquad
\text{on }\mathbb R\times[0,T).
\]
Furthermore,
\[
\widehat{\varphi}_n(g_n,t_n)
=
\varphi_n(g_n,t_n)
=
v^T(g_n,t_n).
\]
Thus $v^T-\widehat{\varphi}_n$ attains a global minimum at
$(g_n,t_n)$. Since $\widehat{\varphi}_n=\varphi_n$ in a neighborhood
of $(g_n,t_n)$, their derivatives agree at the contact point.
For simplicity,  we therefore compute the derivatives using $\varphi_n$.
At the contact point $(g_n,t_n)$, we have
\begin{equation}\label{eq:phi-derivatives}
\begin{aligned}
-\partial_t\varphi_n(g_n,t_n)
&=
-e^{x_n}\partial_t\mu_n(x_n,t_n),\\
g_n\partial_g\varphi_n(g_n,t_n)
&=
e^{x_n}
\bigl(
\mu_n(x_n,t_n)+\partial_x\mu_n(x_n,t_n)
\bigr)=
e^{x_n}
\bigl(
W^T(x_n,t_n)+\partial_x\mu_n(x_n,t_n)
\bigr),\\
\partial_{gg}\varphi_n(g_n,t_n)
&=
e^{-x_n}
\bigl(
\partial_x\mu_n(x_n,t_n)
+\partial_{xx}\mu_n(x_n,t_n)
\bigr).
\end{aligned}
\end{equation}
By \eqref{gamma_*(g,z)}, $\gamma_*(g,z)=0$ for $0<|z|\leq1$, while for
$|z|>1$,
\[
g_n+\gamma_*(g_n,z)
=
\left(1+\frac{M}{\sqrt{\lambda}}\right)g_n
=:\xi\cdot g_n.
\]
Set $b:=\log \xi$. Hence, for every $\delta\in(0,1)$, the small-jump
contribution vanishes and
\[
\mathcal I^{2,\delta}[v^T,\widehat{\varphi}_n](g_n,t_n)
=
\lambda\bigl[v^T(\xi g_n,t_n)-v^T(g_n,t_n)\bigr].
\]
Applying \eqref{eq:viscosity-supersolution} and
\eqref{eq:split-test-operator}, and using that
$\widehat{\varphi}_n$ and $\varphi_n$ have the same derivatives at the
contact point, gives
\[
\begin{aligned}
0\leq{}&
-\partial_t\varphi_n(g_n,t_n)
+r v^T(g_n,t_n)
+\rho g_n\partial_g\varphi_n(g_n,t_n)-\frac{\sigma^2}{2}\partial_{gg}\varphi_n(g_n,t_n)
-\lambda\bigl[v^T(\xi g_n,t_n)-v^T(g_n,t_n)\bigr].
\end{aligned}
\]
Using \eqref{eq:phi-derivatives}, $g_n=e^{x_n}$, and
$v^T(\xi e^{x_n},t_n)=\xi e^{x_n}W^T(x_n+b,t_n),$
and dividing by $e^{x_n}$, we obtain
\begin{multline}\label{eq:W-inequality}
\partial_t\mu_n(x_n,t_n)
\leq
(r+\rho+\lambda)W^T(x_n,t_n)
+\rho\partial_x\mu_n(x_n,t_n)\\
-\frac{\sigma^2}{2}e^{-2x_n}
\bigl(\partial_x\mu_n(x_n,t_n)
+\partial_{xx}\mu_n(x_n,t_n)\bigr)
-\lambda qW^T(x_n+b,t_n).
\end{multline}
By \eqref{eq:test-derivatives} and \eqref{eq:minimum-sequence},
\[
\partial_x\mu_n(x_n,t_n)+\partial_{xx}\mu_n(x_n,t_n)
=
-\frac{2}{n}\bigl((x_n-y_n)+1\bigr)\to0.
\]
Since $x_n\to+\infty$, it follows that
\begin{equation}\label{eq:diffusion-limit}
-\frac{\sigma^2}{2}e^{-2x_n}
\bigl(\partial_x\mu_n(x_n,t_n)
+\partial_{xx}\mu_n(x_n,t_n)\bigr)
\to0.
\end{equation}
Since $x_n+b\to+\infty$ and $t_n \to t_0$, our definition of $L^T(t)$ as the joint limit inferior at spatial infinity immediately gives
\begin{equation}\label{eq:shifted-liminf}
    \liminf_{n\to\infty}W^T(x_n+b,t_n) \geq \liminf_{(x,s) \to (+\infty, t_0)} W^T(x,s) = L^T(t_0).
\end{equation}
Taking the limit superior in \eqref{eq:W-inequality} as $n\to\infty$,
and using \eqref{eq:derivative-limits}, \eqref{eq:diffusion-limit},
\eqref{eq:W-contact-sequence}, and \eqref{eq:shifted-liminf}, we obtain
\begin{align*}
\zeta'(t_0)
&\leq
\bigl(r+\rho+\lambda\bigr)L^T(t_0)
-\lambda \xi
\liminf_{n\to\infty}W^T(x_n+b,t_n)\\
&\leq
\bigl(r+\rho+\lambda\bigr)L^T(t_0)
-\lambda \xi L^T(t_0)\\
&=
\bigl(r+\rho-\lambda(\xi-1)\bigr)L^T(t_0).
\end{align*}
Since
\[
\lambda(\xi-1)
=
\lambda\frac{M}{\sqrt{\lambda}}
=
\sqrt{\lambda}M,
\]
it follows that
\[
\zeta'(t_0)
\leq
\bigl(r+\rho-\sqrt{\lambda}M\bigr)L^T(t_0)
=
-\beta L^T(t_0).
\]
Since this holds for every $\zeta\in C^1((0,T))$ touching $L^T$ from
below at any $t_0\in(0,T)$, we conclude that $L^T$ satisfies
\begin{equation}\label{eq:reduced-ode}
-\partial_t L^T-\beta L^T\geq 0
\end{equation}
in the viscosity sense on $(0,T)$. By the exponential change of variables
and the standard comparison principle for viscosity solutions
\cite{CrandallIshiiLions1992}, it follows that the rescaled function
\[
t\mapsto e^{\beta t}L^T(t)
\]
is non-increasing on $(0,T)$. Hence, for every $0<t<s<T$,
\[
e^{\beta t}L^T(t)
\geq
e^{\beta s}L^T(s),
\]
or equivalently,
\begin{equation}\label{eq:interior-inflation}
L^T(t)
\geq
e^{\beta(s-t)}L^T(s)
\geq
e^{\beta(s-t)}\alpha(s).
\end{equation}
Taking $s=\tau$ and using $\alpha(\tau)\geq\alpha_*$ together with
\eqref{eq:envelope-bounds}, we obtain, for every $0<t<\tau$,
\[
e^{\beta(\tau-t)}\alpha_*
\leq
L^T(t)
\leq
C_{\max}.
\]
Letting $t\downarrow0$ gives
\[
e^{\beta\tau}\alpha_*
\leq
C_{\max},
\]
and therefore
\[
\tau
\leq
\frac{1}{\beta}
\log\left(\frac{C_{\max}}{\alpha_*}\right)
=
T_{\mathrm{crit}}.
\]
This contradicts the assumption $\tau>T_{\mathrm{crit}}$ and completes the proof.
\end{proof}

%%%%%%%%%%%%%%%%%%%%%%%%%%%%%%%%%%%%%%%%%%%%%%%%%%%%%%%%%%

\begin{corollary}[Non-Existence Threshold]\label{cor:sharp-blowup}
 Let for given constant $M>0$ we have $\beta > 0$, and let $\alpha \in C^1([0,T])$ be a non-increasing obstacle profile ($\alpha'(t) \le 0$) with $\alpha(0) > 0$. If a fixed linear-growth ceiling constant $C_{T} > 0$ satisfies 
\[
    C_{T} < \sup_{t \in [0, T)} \left[ e^{\beta t} \alpha(t) \right],
\]
then there exists an admissible jump \(\gamma_{*}\) with
$L^2(\nu)$-Lipschitz constant $M$, for which there is no nonnegative lower semicontinuous viscosity supersolution $v^T : \mathbb{R} \times [0,T] \to [0,\infty)$ satisfying both the parabolic integro-differential inequality on $\mathbb{R} \times (0,T)$ and the bilateral obstacle condition $v^T(g,t)-v^T(-g,t) \geq \alpha(t)g-c$ across $\mathbb{R} \times [0,T]$ within the growth class $v^T(g,t) \leq C_T(1+|g|)$.
\end{corollary}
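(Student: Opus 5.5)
The corollary should follow from Theorem \ref{prop:long-horizon}, or more precisely from the envelope inequality established in its proof. I will reuse that argument directly rather than its final threshold statement. I take the same jump amplitude $\gamma_*$ from \eqref{gamma_*(g,z)}. It is admissible with $L^2(\nu)$-Lipschitz constant $M$, and $\lambda>0$ follows from $\beta>0$ and $r>0$.

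Suppose, for contradiction, that a nonnegative lower semicontinuous viscosity supersolution $v^T$ exists with $v^T\le C_T(1+|g|)$ and satisfies the obstacle inequality. I define $W^T$ and the envelope $L^T$ exactly as in \eqref{defW^T}–\eqref{envelope}. The bounds \eqref{eq:envelope-bounds} then give $\alpha(t)\le L^T(t)\le C_T$ for $t\in(0,T)$. The penalization argument of the preceding proof depends only on these objects, so it applies verbatim. It shows that $L^T$ is a viscosity supersolution of $-\partial_tL^T-\beta L^T\ge0$ on $(0,T)$, and hence that $t\mapsto e^{\beta t}L^T(t)$ is non-increasing there.

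The key step is to choose the base point well. Fix $s\in(0,T)$. For every $0<t<s$, \eqref{eq:interior-inflation} gives
\[
C_T\ge L^T(t)\ge e^{\beta(s-t)}L^T(s)\ge e^{\beta(s-t)}\alpha(s).
\]
Letting $t\downarrow0$ yields $C_T\ge e^{\beta s}\alpha(s)$ for every $s\in(0,T)$. At $s=0$, the monotonicity already gives $L^T(t)\le C_T$. By continuity of $\alpha$ we get $\alpha(0)=\lim_{t\downarrow 0}\alpha(t)\le C_T$, and $e^{0}\alpha(0)=\alpha(0)$, so the endpoint is also covered. Taking the supremum over $s\in[0,T)$ gives $C_T\ge\sup_{t\in[0,T)}e^{\beta t}\alpha(t)$, which contradicts the hypothesis. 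This yields nonexistence. The hypotheses $\alpha(0)>0$ and $\alpha'\le0$ are needed only to make the supremum positive and the setting consistent.

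The main obstacle I expect is rigor at the endpoints. The proof of Theorem \ref{prop:long-horizon} used a fixed $\tau$ with $\alpha\ge\alpha_*$ on $[0,\tau]$. Here I instead need the inequality $L^T(s)\ge\alpha(s)$ pointwise at arbitrary $s$. This holds by \eqref{eq:envelope-bounds}, because the obstacle is assumed on all of $\mathbb R\times[0,T]$, which is even stronger. I also need the monotonicity of $e^{\beta t}L^T$ across all of $(0,T)$. The latter requires a comparison argument for a possibly discontinuous, but bounded and lower semicontinuous, viscosity supersolution of a linear ODE. If that step is delicate, I would prove it directly instead. Suppose $e^{\beta t}L^T(t)$ increased somewhere, with $t_1<t_2$. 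I would touch $L^T$ from below by $\zeta(t)=(e^{\beta t_1}L^T(t_1)-\varepsilon+\kappa(t-t_1))e^{-\beta t}$ with small $\kappa>0$ at a first contact point. There $\zeta'=-\beta\zeta+\kappa e^{-\beta t}>-\beta\zeta$, which contradicts $\zeta'(t_0)\le-\beta L^T(t_0)$.
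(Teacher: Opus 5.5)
Your proof is correct. It runs on the same machinery as the paper: the slope envelope $L^T$, the viscosity inequality $-\partial_tL^T-\beta L^T\geq0$, and the resulting monotonicity of $e^{\beta t}L^T(t)$. The difference is where you enter that machinery.

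The paper uses Theorem~\ref{prop:long-horizon} as a black box. It selects one $\tau\in(0,T)$ with $e^{\beta\tau}\alpha(\tau)>C_T$, using continuity of $\alpha$ to move off $t=0$ if the supremum sits there. It then sets $\alpha_*:=\alpha(\tau)$ and uses $\alpha'\leq0$ to get $\alpha\geq\alpha_*$ on $[0,\tau]$, so that $\tau>T_{\mathrm{crit}}$ and the theorem's hypotheses hold.

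You instead reopen that proof and apply the inflation inequality \eqref{eq:interior-inflation} at every base point $s\in(0,T)$. You handle $s=0$ separately via $\alpha(0)\leq C_T$, obtained from \eqref{eq:envelope-bounds} by letting $t\downarrow0$ (this comes from the envelope bound, not from monotonicity as you wrote). This yields $C_T\geq\sup_{[0,T)}e^{\beta t}\alpha(t)$ in one stroke.

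The paper's route is more modular. Yours avoids the case distinction at $t=0$ and shows that $\alpha'\leq0$ is not actually needed. Indeed, the proof of Theorem~\ref{prop:long-horizon} only uses $L^T(\tau)\geq\alpha(\tau)$, never $\alpha\geq\alpha_*$ on $[0,\tau]$.

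One caution concerns your optional direct proof that $e^{\beta t}L^T$ is non-increasing. As sketched, the first contact of the family $e^{-\beta t}\bigl(c+\kappa(t-t_1)\bigr)$ with $L^T$ on $[t_1,t_2]$ may occur at the left endpoint $t_1$. There you have only one-sided control and cannot invoke the viscosity inequality.

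Contact at $t_2$ is excluded by taking $\kappa(t_2-t_1)<e^{\beta t_2}L^T(t_2)-e^{\beta t_1}L^T(t_1)$. The left endpoint needs an extra step. One way is as follows.
\begin{enumerate}
\item Add the barrier $-\delta/(t-t_1)$ to the test function. Then contact cannot occur at $t_1$, and interior contact is excluded because the test slope stays positive.
\item The only remaining case is contact at $t_2$ for every $\delta$. Letting $\delta\to0$, this forces $e^{\beta t}L^T$ to jump up immediately to the right of $t_1$.
\item Exclude that jump by testing at $t_1$ with a steep affine function on a small two-sided interval, using lower semicontinuity to control the left side.
\end{enumerate}
The paper simply cites the standard comparison principle at this point, so relying on that citation as your main line is at the same level of rigor as the paper.
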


\begin{proof}
Because $C_T$ lies strictly below the supremum, there exists a time slice in $[0, T)$ where  $e^{\beta t} \alpha(t)$ exceeds $C_T$. If this supremum is attained at $t = 0$, the continuity of the mapping $t \mapsto e^{\beta t}\alpha(t)$ (provided by $\alpha \in C^1([0,T])$) ensures that we can always select a strictly positive time $\tau \in (0, T)$ such that $e^{\beta \tau}\alpha(\tau) > C_T$ remains  true. Defining the strictly positive constant $\alpha_* := \alpha(\tau) > 0$, the monotonicity hypothesis ($\alpha'(t) \le 0$) gives that $\alpha(t) \ge \alpha_*$ for all $t \in [0, \tau]$. Furthermore, the strict inequality can be rewritten as $\alpha_* e^{\beta \tau} > C_T$, which is equivalent to
\[
    \tau > \frac{1}{\beta} \log\left(\frac{C_T}{\alpha_*}\right) = T_{\mathrm{crit}}.
\]
Thus, all hypotheses of Theorem \ref{prop:long-horizon} are satisfied, ruling out the existence of any nonnegative LSC viscosity supersolution bounded by $C_T(1+|g|)$.
\end{proof}

%%%%%%%%%%%%%%%%%%%%%%%%%%%%%%%%%%%%%%%%%%%%%%%%%%%%%%%%%%%
%%%%%%%%%%%%%%%%%%%%%%%%%%%%%%%%%%%%%%%%%%%%%%%%%%%%%%%%%%%%

Next, we address the exact critical boundary case $r+\rho = \sqrt{\lambda}L_\gamma$. To establish global existence in this setting, we must impose an additional structural constraint on the large jumps, which we call the ``no-crossing'' condition. Economically, this means that when the market is in a state of extreme pessimism (large negative $g$), a single shock cannot instantaneously transport investor sentiment across the threshold into extreme optimism. Mathematically, this restriction ensures that jumps originating far in the negative axis do not cross over to evaluate the linearly growing positive tail of the supersolution, which keeps the non-local integral bounded.

\begin{theorem}\label{thm:boundary-case}
Let $\nu(dz)$ be a L\'evy measure on $\mathbb{R}_0$ with finite
large-jump intensity $\lambda:=\nu(\{|z|>1\})<\infty,$
and let $\gamma$ be an admissible jump amplitude satisfying
\[
r+\rho=\sqrt{\lambda}\,L_\gamma.
\]
In addition, assume that the large jumps satisfy the following
no-crossing condition on the negative tail: there exist constants
$\Delta\in(0,1)$ and $R_0>0$ such that, for every $g\leq -R_0$ and
for $\nu$-a.e.\ $z$ with $|z|>1$,
\begin{equation}\label{eq:no-crossing}
g+\gamma(g,z)\leq -\Delta|g|.
\end{equation}
Then there exists a nonnegative continuous global viscosity supersolution
$\tilde{u}$ of
\eqref{eq:bubble-pide}--\eqref{eq:bubble-terminal-condition}
with at most linear growth in the spatial variable. More precisely,
there exists a constant $C>0$ such that
\[
0\leq \tilde{u}(g,t)\leq C(1+|g|),
\qquad
(g,t)\in\mathbb{R}\times[0,T].
\]
\end{theorem}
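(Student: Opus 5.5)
The plan is to reuse the profile of Theorem~\ref{lem:super-pide} essentially unchanged: set $\tilde u(g,t):=\alpha(t)f(g)$ with $f=h+K$ and $h$ as in Theorem~\ref{lem:super-pide}, for some fixed $m>1/2$, $a>0$, and $K$ large.

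Several parts carry over verbatim from that proof:
\begin{itemize}
\item $h\in C^2$;
\item the obstacle inequality, since $f(g)-f(-g)=g$;
\item nonnegativity, once $K\ge K_{\rm nonneg}$;
\item the terminal condition, since $\alpha(T)=0$;
\item the linear growth bound $0\le\tilde u\le C(1+|g|)$ with $C=\max\alpha\cdot(m+\tfrac12+K)$;
\item $J_{\rm small}h(g)=0$ for $|g|\ge R$;
\item the compact-region estimate on $|g|\le R$, absorbed by $K\ge H_R+M_R/r$.
\end{itemize}
What changes is the treatment of the two tails. Since $r+\rho-\sqrt\lambda L_\gamma=0$, the linear-in-$|g|$ terms no longer carry a positive coefficient. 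The aim is therefore to show that they cancel exactly, and that only bounded constants remain, which are then absorbed by the term $(r\alpha-\alpha')K\ge r\alpha K$.

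On the positive tail $g\ge R$, I keep the estimate \eqref{eq:J-large-estimate}. Together with \eqref{eq:P-positive-tail} it gives
\[
-\mathcal L^\nu\tilde u\ge \alpha(t)\Bigl(m+\tfrac12\Bigr)\bigl[(r+\rho-\sqrt\lambda L_\gamma)g-\sqrt\lambda C_{\rm large}\bigr]+(r\alpha-\alpha')\Bigl(K-\tfrac{3a}{8}m\Bigr).
\]
The bracket reduces to $-\sqrt\lambda C_{\rm large}$. Using $-\alpha'\ge0$, the right-hand side is at least $\alpha(t)\bigl[rK-\tfrac{3a}{8}mr-(m+\tfrac12)\sqrt\lambda C_{\rm large}\bigr]$, which is nonnegative for $K$ large. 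No horizon dependence enters.

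The negative tail is where Theorem~\ref{lem:super-pide} needed $r+\rho>\sqrt\lambda L_\gamma$ strictly. There, the large-jump term was bounded with the global Lipschitz constant $m+\tfrac12$, while the local term only provides the slope $m-\tfrac12$. This mismatch is the main obstacle, and the no-crossing condition \eqref{eq:no-crossing} is exactly what removes it. Take $R\ge\max\{R_0,a/\Delta\}$. Then for $g\le-R$ and $\nu$-a.e.\ $|z|>1$, both $g$ and $g+\gamma(g,z)\le-\Delta|g|\le-a$ lie in $(-\infty,-a]$. On that half-line $h(x)=(m-\tfrac12)|x|-\tfrac{3a}{8}m$ is Lipschitz with constant $m-\tfrac12$. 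Hence
\[
J_{\rm large}h(g)\le \Bigl(m-\tfrac12\Bigr)\int_{\{|z|>1\}}|\gamma(g,z)|\,\nu(dz)\le \Bigl(m-\tfrac12\Bigr)\sqrt\lambda\bigl(L_\gamma|g|+C_{\rm large}\bigr),
\]
by the same Cauchy--Schwarz and Minkowski step as before. Combining this with \eqref{eq:P-negative-tail}, the $|g|$-coefficient is again $(m-\tfrac12)\alpha(t)(r+\rho-\sqrt\lambda L_\gamma)=0$. The remainder is bounded below by $\alpha(t)\bigl[rK-\tfrac{3a}{8}mr-(m-\tfrac12)\sqrt\lambda C_{\rm large}\bigr]\ge0$ for $K$ large. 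I need to check that the a.e.\ qualifier in \eqref{eq:no-crossing} is harmless under the $\nu$-integral, which it should be.

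Finally, I fix $R$ as the maximum of the small-jump radius from Theorem~\ref{lem:super-pide}, $R_0$, and $a/\Delta$. I then choose $K$ as the maximum of the compact-region threshold, the two tail thresholds above, $\tfrac{3a}{8}m$, and $K_{\rm nonneg}$. This gives $-\mathcal L^\nu\tilde u\ge0$ on all of $\mathbb R\times[0,T)$. Since $\tilde u\in C^{2,1}$ and the integral is well defined, the classical inequalities then yield the viscosity supersolution property exactly as at the end of Theorem~\ref{lem:super-pide}.

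The resulting barrier is asymmetric: slope $m+\tfrac12$ on the right and $m-\tfrac12$ on the left. Its growth constant is independent of $T$, in contrast with Corollary~\ref{Cor: exp-growth}.
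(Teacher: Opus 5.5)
Your proof is correct; the $\nu$-a.e.\ qualifier in the no-crossing condition is indeed harmless, since it only enters under the $\nu$-integral. It follows the paper's overall plan: a separable ansatz $\alpha(t)f(g)$ with $f(g)-f(-g)=g$, exact cancellation of the linear terms on the positive tail via $r+\rho=\sqrt{\lambda}L_\gamma$, no-crossing to keep negative-tail jumps off the steep positive tail, and $K$ absorbing the compact region. The difference lies in the profile and in how the negative tail is handled.

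The paper uses $f(g)=\tfrac12\bigl(g+\sqrt{g^2+b^2}\bigr)+K$, a smoothing of $g^++K$ that is asymptotically flat on the left. There, no-crossing confines the jump segment to $\{y\le-\Delta|g|\}$, where $f'(y)\le b^2/(4\Delta^2g^2)$. Hence $J_{\rm large}\tilde u=\alpha(t)O(|g|^{-1})$, and nothing linear has to cancel on that side. You instead keep slope $m-\tfrac12>0$ on the left and use no-crossing to lower the effective Lipschitz constant from $m+\tfrac12$ to $m-\tfrac12$. The linear coefficient there then becomes $(m-\tfrac12)\alpha(t)(r+\rho-\sqrt{\lambda}L_\gamma)=0$.

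Your version gains exactness. Since $h$ is affine for $|g|\ge a$, you get $J_{\rm small}h=0$ for $|g|\ge R$, and the negative-tail increment is exactly $-(m-\tfrac12)\gamma(g,z)$. There is no $o(1)$ bookkeeping, and $R$ is fixed before and independently of $K$. It also shows that the family of Theorem~\ref{lem:super-pide} reaches the critical case without that theorem's large-$m$ requirement.

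The paper's version gains slack and tightness on the left. Its negative-tail estimate does not use the balance condition at all. Its barrier also stays bounded as $g\to-\infty$, tending to $\alpha(t)K$, which is closer to the subsolution $[\psi]^+$ there; yours grows like $(m-\tfrac12)\alpha(t)|g|$.

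Your argument also runs verbatim with $m=\tfrac12$, since $h$ is still bounded below. That choice makes your negative tail exactly constant and gives the piecewise-polynomial analogue of the paper's profile.
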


\begin{proof}
At the critical threshold $ r+\rho=\sqrt{\lambda}\,L_\gamma,$ consider
\[
\tilde{u}(g,t):=\alpha(t)f(g),
\qquad
f(g):=\frac{1}{2}\left(g+\sqrt{g^2+b^2}\right)+K,
\]
where $b>0$ and $K>0$ will be chosen below. Since
\[
|g|<\sqrt{g^2+b^2}\leq |g|+b,
\]
we have
\[
0<K<f(g)\leq |g|+\frac{b}{2}+K.
\]
Hence, using $0\leq\alpha(t)\leq\alpha(0)$,
\[
0\leq\tilde{u}(g,t)
\leq
\alpha(0)\left(|g|+\frac{b}{2}+K\right)
\leq C(1+|g|)
\]
for some $C>0$ and all $(g,t)\in\mathbb{R}\times[0,T]$.

Since $\sqrt{g^2+b^2}$ is even, then $f(g)-f(-g)=g,$
and therefore
\[
\tilde{u}(g,t)-\tilde{u}(-g,t)
=
\alpha(t)g
\geq
\alpha(t)g-c
=
\psi(g,t).
\]
Thus, $\tilde{u}$ is nonnegative, has at most linear growth, and satisfies
the obstacle condition.

We next verify the differential inequality. Since
\[
f'(g)
=
\frac{1}{2}\left(1+\frac{g}{\sqrt{g^2+b^2}}\right),
\qquad
f''(g)
=
\frac{b^2}{2(g^2+b^2)^{3/2}},
\]
we have
$
0<f'(g)<1,
\;\;
f''(g)>0.
$
Using $\alpha'(t)\leq0$ and $f(g)>0$, we obtain
\[
-\mathcal{P}\tilde{u}
\geq
\alpha(t)
\left[
rf(g)+\rho g f'(g)-\frac{\sigma^2}{2}f''(g)
\right].
\]
We first consider the positive tail. As $g\to+\infty$,
\[
f(g)=g+K+O(g^{-1}),
\qquad
g f'(g)=g+O(g^{-1}),
\qquad
f''(g)=O(g^{-3}).
\]
Hence, using $r+\rho=\sqrt{\lambda}\,L_\gamma$, we get
\begin{equation}\label{eq:critical-local-positive}
-\mathcal{P}\tilde{u}
\geq
\alpha(t)
\left[
\sqrt{\lambda}\,L_\gamma g+rK+o(1)
\right].
\end{equation}
For the large-jump part, the inequality $0<f'(g)<1$ and the mean value
theorem give
\[
\left|
f\bigl(g+\gamma(g,z)\bigr)-f(g)
\right|
\leq
|\gamma(g,z)|.
\]
Thus, by the same Cauchy--Schwarz and Minkowski estimates as in
Theorem \ref{lem:super-pide},
\[
J_{\rm large}\tilde{u}(g,t)
\leq
\alpha(t)\sqrt{\lambda}
\left(
L_\gamma|g|+C_{\rm large}
\right).
\]
For the small-jump part, the admissibility condition on $\gamma$ implies,
as in Theorem \ref{lem:super-pide}, that for all sufficiently large
$|g|$,
\[
|g+\theta\gamma(g,z)|
\geq
\frac{\epsilon}{2}|g|,
\qquad
0<|z|\leq1,\quad \theta\in[0,1].
\]
Therefore,
\[
f''\bigl(g+\theta\gamma(g,z)\bigr)
=
O(|g|^{-3}),
\]
uniformly in $0<|z|\leq1$ and $\theta\in[0,1]$. Moreover, by
Minkowski's inequality and the admissibility assumptions,
\[
\left(
\int_{\{0<|z|\leq1\}}
|\gamma(g,z)|^2\,\nu(dz)
\right)^{1/2}
\leq
L_\gamma|g|+C_0,
\]
where
\[
C_0
:=
\left(
\int_{\{0<|z|\leq1\}}
|\gamma(0,z)|^2\,\nu(dz)
\right)^{1/2}
<\infty.
\]
Hence
\[
\int_{\{0<|z|\leq1\}}
\gamma(g,z)^2\,\nu(dz)
=
O(g^2).
\]
Taylor's formula then yields
\begin{equation}\label{eq:critical-small-jumps}
J_{\rm small}\tilde{u}(g,t)
=
\alpha(t)O(|g|^{-1})
=
\alpha(t)o(1),
\qquad
|g|\to\infty.
\end{equation}
Therefore, as $g\to+\infty$, we get
\[
J\tilde{u}(g,t)
\leq
\alpha(t)
\left[
\sqrt{\lambda}\,L_\gamma g
+
\sqrt{\lambda}\,C_{\rm large}
+
o(1)
\right].
\]
Combining this with \eqref{eq:critical-local-positive}, the linear terms
cancel and
\[
-\mathcal{L}^{\nu}\tilde{u}(g,t)
\geq
\alpha(t)
\left[
rK-\sqrt{\lambda}\,C_{\rm large}+o(1)
\right].
\]
Thus, if
$K>\frac{\sqrt{\lambda}\,C_{\rm large}}{r},$
there exists $R_{\rm pos}>0$ such that
\[
-\mathcal{L}^{\nu}\tilde{u}(g,t)\geq0,
\qquad
g\geq R_{\rm pos},\quad t\in[0,T].
\]
We now consider the negative tail. As $g\to-\infty$, we have
\[
f(g)=K+O(|g|^{-1}),
\qquad
g f'(g)=O(|g|^{-1}),
\qquad
f''(g)=O(|g|^{-3}).
\]
For $g\leq-R_0$, the no-crossing condition
\eqref{eq:no-crossing} implies that the segment joining $g$ and
$g+\gamma(g,z)$ lies in $\{y\leq-\Delta|g|\}$
for $\nu$-a.e.\ $z$ with $|z|>1$. On this region, we have
\[
0<f'(y)
=
\frac{b^2}
{2\sqrt{y^2+b^2}\bigl(\sqrt{y^2+b^2}+|y|\bigr)}
\leq
\frac{b^2}{4\Delta^2g^2}.
\]
Hence, by the mean value theorem and the large-jump estimate above,
\[
\begin{aligned}
|J_{\rm large}\tilde{u}(g,t)|
&\leq
\alpha(t)\frac{b^2}{4\Delta^2g^2}
\int_{\{|z|>1\}}|\gamma(g,z)|\,\nu(dz)
\\
&\leq
\alpha(t)\frac{b^2}{4\Delta^2g^2}
\sqrt{\lambda}
\left(
L_\gamma|g|+C_{\rm large}
\right)
=
\alpha(t)O(|g|^{-1}).
\end{aligned}
\]
Together with \eqref{eq:critical-small-jumps}, this gives
\[
|J\tilde{u}(g,t)|
=
\alpha(t)O(|g|^{-1}),
\qquad
g\to-\infty.
\]
Therefore,
\[
-\mathcal{L}^{\nu}\tilde{u}(g,t)
\geq
\alpha(t)\left[rK+o(1)\right]
\qquad
\text{as }g\to-\infty.
\]
Since $rK>0$, there exists $R_{\rm neg}\geq R_0$ such that
\[
-\mathcal{L}^{\nu}\tilde{u}(g,t)
\geq
\alpha(t)\frac{rK}{2}
\geq0,
\qquad
g\leq-R_{\rm neg},\quad t\in[0,T].
\]
It remains to control the operator sign on the bounded region. Set
\[
R:=\max\{R_{\rm pos},R_{\rm neg}\},
\qquad
f(g)=f_0(g)+K,
\qquad
f_0(g):=\frac{1}{2}\left(g+\sqrt{g^2+b^2}\right).
\]
Since $K$ does not contribute to the spatial derivatives or the jump
operator, and $\alpha'(t)\leq0$,
\[
-\mathcal{L}^{\nu}\tilde{u}(g,t)
\geq
\alpha(t)\left[rK+\Psi(g)\right],
\]
where
\[
\Psi(g)
:=
rf_0(g)+\rho g f_0'(g)
-\frac{\sigma^2}{2}f_0''(g)-Jf_0(g).
\]
We verify that $Jf_0$ is uniformly bounded on $[-R,R]$. Since
$0<f_0'(g)<1,$
the mean value theorem gives
\[
|J_{\rm large}f_0(g)|
\leq
\int_{\{|z|>1\}}|\gamma(g,z)|\,\nu(dz)
\leq
\sqrt{\lambda}
\left(
L_\gamma|g|+C_{\rm large}
\right).
\]
For the small-jump part, since
\[
0<f_0''(g)
=
\frac{b^2}{2(g^2+b^2)^{3/2}}
\leq
\frac{1}{2b},
\]
Taylor's formula gives
\[
|J_{\rm small}f_0(g)|
\leq
\frac{1}{4b}
\int_{\{0<|z|\leq1\}}
\gamma(g,z)^2\,\nu(dz)
\leq
\frac{(L_\gamma|g|+C_0)^2}{4b}.
\]
Hence $Jf_0$ is uniformly bounded on $[-R,R]$. Since $f_0$, $f_0'$,
and $f_0''$ are also bounded there, there exists $M_R<\infty$ such that
\[
\Psi(g)\geq-M_R,
\qquad
|g|\leq R.
\]
Therefore,
\[
-\mathcal{L}^{\nu}\tilde{u}(g,t)
\geq
\alpha(t)(rK-M_R),
\qquad
|g|\leq R.
\]
Finally, choose $K$ sufficiently large so that
$
K>\max\left\{
\frac{\sqrt{\lambda}\,C_{\rm large}}{r},
\frac{M_R}{r}
\right\}.
$
Then
\[
-\mathcal{L}^{\nu}\tilde{u}(g,t)\geq0,
\qquad
(g,t)\in\mathbb{R}\times[0,T).
\]
Increasing $K$ only improves the tail estimates established above.

Finally, since $\alpha(T)=0$,
\[
\tilde{u}(g,T)=0,
\qquad
g\in\mathbb{R}.
\]
Thus $\tilde{u}$ is nonnegative, satisfies the linear-growth bound, the
obstacle inequality, and the differential inequality. Since
$\tilde{u}\in C^{2,1}(\mathbb{R}\times[0,T])$ and the non-local integral
is well defined, these classical inequalities imply the corresponding
viscosity supersolution inequalities. Hence $\tilde{u}$ is a nonnegative
global viscosity supersolution of
\eqref{eq:bubble-pide}--\eqref{eq:bubble-terminal-condition}.
\end{proof}

\begin{corollary}
Let $\gamma$ be an admissible jump amplitude. If
\[
r+\rho\ge\sqrt{\lambda}\,L_\gamma,
\]
and $\gamma$ satisfies the no-crossing condition
\eqref{eq:no-crossing}, 
then by Theorem \ref{lem:super-pide} and Theorem \ref{thm:boundary-case}
for every finite horizon $T>0$, there
exists a nonnegative global viscosity supersolution of
\eqref{eq:bubble-pide}--\eqref{eq:bubble-terminal-condition}
with at most linear growth. Moreover, the spatial profile $f$ in the
construction
\[
\widetilde{u}(g,t)=\alpha(t)f(g)
\]
can be chosen independently of the horizon $T$.
\end{corollary}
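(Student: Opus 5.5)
The corollary is a case split on the sign of $(r+\rho)-\sqrt{\lambda}L_\gamma$, followed by an appeal to the two existence theorems already proved. Fix a finite horizon $T>0$ and an admissible profile $\alpha$ on $[0,T]$ satisfying the standing assumptions ($\alpha\in C^1$, $\alpha\ge0$, $\alpha'\le0$, $\alpha(T)=0$).

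\emph{Case $r+\rho>\sqrt{\lambda}L_\gamma$.} Here Theorem \ref{lem:super-pide} applies directly, and the no-crossing hypothesis is not needed. It produces $\widetilde u=\alpha(t)f(g)$ with $f=h+K$. The growth bound comes from the exterior formula $h(g)=m|g|-\tfrac{3a}{8}m+\tfrac g2$ together with the boundedness of $h$ on $[-a,a]$, which give $0\le f(g)\le (m+\tfrac12)|g|+C'$. Since $0\le\alpha(t)\le\alpha(0)$, this yields $0\le\widetilde u\le C(1+|g|)$.

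\emph{Case $r+\rho=\sqrt{\lambda}L_\gamma$.} Theorem \ref{thm:boundary-case} applies, now using the no-crossing condition \eqref{eq:no-crossing}. It gives $\widetilde u=\alpha(t)f(g)$ with $f(g)=\tfrac12(g+\sqrt{g^2+b^2})+K$ and the stated linear bound $0\le\widetilde u\le C(1+|g|)$.

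The remaining claim is that $f$ does not depend on $T$. The point to verify is that none of the constants in either construction ($m,a,R,K$ in the first theorem; $b,K,R_{\rm pos},R_{\rm neg}$ in the second) involves $T$ or $\alpha$.

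In the first theorem:
\begin{itemize}
\item $m$ is fixed by the explicit inequality $m>\frac{(r+\rho)+\sqrt{\lambda}L_\gamma}{2((r+\rho)-\sqrt{\lambda}L_\gamma)}$.
\item $R$ depends only on $\epsilon$, $a$, $C_{\rm large}$ and the rates $r,\rho,\sqrt{\lambda}L_\gamma$.
\item $K$ is the maximum of $H_R+M_R/r$, $\tfrac{3a}{8}m$ and $K_{\rm nonneg}$, all of which are determined by $h$, $\gamma$ and $\nu$.
\end{itemize}
Time enters only through the factor $r\alpha-\alpha'\ge r\alpha\ge0$, which the proof factored out using $\alpha'\le0$.

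In the boundary case the same holds: $K>\max\{\sqrt{\lambda}C_{\rm large}/r,\;M_R/r\}$, and $R_{\rm pos}$, $R_{\rm neg}$ come from asymptotic expansions in $g$ alone. In both cases the time dependence sits entirely in the factor $\alpha(t)$.

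The only step needing care is this bookkeeping of constants. Specifically, one must confirm that the $o(1)$ terms in the proof of Theorem \ref{thm:boundary-case} are uniform in $t$. They are, because each such term appears multiplied by $\alpha(t)$ and its own bound depends only on $g$. Once this is checked, the conclusion follows for every $T>0$, in contrast to the $T$-dependent construction of Theorem \ref{lem:finite-horizon-existence}.
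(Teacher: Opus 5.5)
Your proposal is correct and follows the paper's route: the corollary is just Theorem \ref{lem:super-pide} when $r+\rho>\sqrt{\lambda}L_\gamma$ (where, as you note, the no-crossing hypothesis is not needed) and Theorem \ref{thm:boundary-case} at equality, with the horizon-independence of $f$ already observed in the remark after Theorem \ref{lem:super-pide}. Your bookkeeping of the constants, including the point that the $o(1)$ remainders in Theorem \ref{thm:boundary-case} depend on $g$ alone, simply makes that remark explicit. (One small imprecision: the tail radius $R$ in Theorem \ref{lem:super-pide} also depends on $m$ and on the radius $R_1$ from the small-jump asymptotic condition, but these are likewise free of $T$ and $\alpha$.)
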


\bibliographystyle{acm} % {acm}, {alpha}, {abbrev}, {plain}
\bibliography{Bubble-supersolutions}

\end{document}